\documentclass[a4paper,11pt]{amsart}

\usepackage[utf8x]{inputenc}
\usepackage[english]{babel}
\usepackage{geometry}
\usepackage{fullpage}
\usepackage{hyperref}
\usepackage{amsmath}
\usepackage{amsthm}
\usepackage{amsfonts}
\usepackage{amssymb}
\usepackage{amsxtra}
\usepackage{graphicx}
\usepackage{epsfig}
\usepackage{enumerate}
\usepackage{mathrsfs}
\usepackage{xfrac}
\usepackage{tikz}
\usetikzlibrary{arrows}
\usepackage{latexsym}
\usepackage{titletoc}
\usepackage{indentfirst}
\usepackage{stmaryrd}

\frenchspacing

\swapnumbers
\theoremstyle{plain}
\newtheorem{prop}{Proposition}[section]
\newtheorem{theorem}[prop]{Theorem}

\newtheorem{lemma}[prop]{Lemma}
\newtheorem{crit}[prop]{Useful Criterion}

\theoremstyle{definition}
\newtheorem{definit}[prop]{Definition}
\newtheorem{parag}[prop]{}
\theoremstyle{remark}
\newtheorem{remark}[prop]{\textsc{Remark}}
\newtheorem{notat}[prop]{\textsc{Notation}}

\newcommand{\Art}{\text{Art}}

\newcommand{\qcoh}{\text{qcoh}}

\newcommand{\Hom}{\text{Hom}}
\newcommand{\id}{\text{id}}
\newcommand{\ob}{\text{ob}}

\newcommand{\bbA}{\mathbb{A}}

\newcommand{\calX}{\mathcal{X}}

\newcommand{\calY}{\mathcal{Y}}
\newcommand{\frakC}{\mathfrak{C}}

\newcommand{\frakE}{\mathfrak{E}}
\newcommand{\frakN}{\mathfrak{N}}
\newcommand{\Ho}{\text{Ho}}
\newcommand{\opp}{\text{opp}}

\newcommand{\lisse}{\text{lis-\'et}}
\newcommand{\barx}{\overline{x}}
\newcommand{\bark}{\overline{k}}

\newcommand{\scrO}{\mathscr{O}}

\newcommand{\virt}{\text{virt}}

\newcommand{\AbCones}{(\sfrac{\text{AbCones}}{\mathcal{X}})}
\newcommand{\CCoh}{\text{C}^{[-1,0]}({\text{Coh}}(\mathcal{X}_{\text{lis-\'et}}))}
\newcommand{\Ccoh}{\text{C}_{\text{coh}}^{[-1,0]}(\mathcal{X}_{\text{lis-\'et}})}
\newcommand{\DCoh}{\text{D}^{[-1,0]}({\text{Coh}}(\mathcal{X}_{\text{lis-\'et}}))}
\newcommand{\Dcoh}{\text{D}_{\text{coh}}^{[-1,0]}(\mathcal{X}_{\text{lis-\'et}})}
\newcommand{\Dcohprime}{\text{D}_{\text{coh}}^{[-1,0]}(\mathcal{X}_{\text{lis-\'et}}')}
\newcommand{\Onecoh}{\text{D}_{\text{coh}}^{[-1,1]}(\mathcal{X}_{\text{lis-\'et}})}
\renewcommand{\phi}{\varphi}

\DeclareMathOperator{\aut}{Aut}

\DeclareMathOperator{\im}{im}

\DeclareMathOperator{\rk}{rk}
\DeclareMathOperator{\spec}{Spec}
\DeclareMathOperator{\sez}{\Gamma}

\author{Flavia Poma}
\title{Virtual classes of Artin stacks}
\date{\today\\
MSC classes: 14C15, 14N35}

\begin{document}
\begin{abstract}
  We construct virtual fundamental classes of Artin stacks over a Dedekind domain endowed
  with a perfect obstruction theory.
\end{abstract}
\maketitle
\setcounter{tocdepth}{1}
\tableofcontents
\section{Introduction}
Virtual classes of moduli stacks play a central role in enumerative
geometry as they represent a major ingredient in the
construction of deformation invariants (e.g. Gromov-Witten,
Donaldson-Thomas). They were introduced in \cite{BF} for
Deligne-Mumford stacks; the construction has been applied to develop
Gromov-Witten theory of algebraic varieties and Deligne-Mumford
stacks, as well as Donaldson-Thomas theory (see \cite{B}, \cite{BM},
\cite{AGV02}, \cite{AGV08}, \cite{Tho}).

\subsection{The existing construction for DM stacks} A
  virtual class of an algebraic stack $\calX$ is a functorial cycle
  class ${[\calX]}^{\virt}$ in the Chow group $A_*(\calX)$ with
  rational coefficients. The idea of the construction comes from
  observing the following two facts of intersection theory of schemes
  and relies on the correspondence between abelian cone stacks and
  2-term complexes of coherent sheaves
  (Theorem~\ref{theorem_deligne}).
\begin{enumerate}
\item Whenever a scheme $X$ is pure dimensional of pure dimension $N$,
  we can define a \emph{fundamental class} in $A_N(X)$ as $[X]=\sum_i
  m_i[X_i]$, where the sum is over the irriducible components of $X$
  and $m_i$ is the multiplicity of $X_i$ in $X$. If $i\colon X
  \rightarrow M$ is a closed immersion and there exists a closed
  immersion $j\colon C_i\rightarrow E$ of the normal cone $C_i$ in a
  vector bundle $\pi\colon E\rightarrow X$ (e.g., if $i$ is regular
  then the normal sheaf $N_i$ is a vector bundle) then $[X]=0^![C_i]$,
  where $0^!$ is the inverse of $\pi^*$ (\cite{fulton} 6.2.1).
\item Let $f\colon X \rightarrow Y$ be a morphism of schemes which
  factors as $p\circ i$ with $i$ closed immersion and $p$ smooth. If
  $f$ is a closed immersion then $N_f=\sfrac{N_i}{i^*T_p}$ and
  $C_f=\sfrac{C_i}{i^*T_p}$, where $T_p$ is the tangent bundle. In
  general, the morphism $i^*T_p\rightarrow N_i$ is not injective
  (\cite{fulton} B.7.5).
\end{enumerate}
The idea is to generalize (1) to morphisms of stacks. For this we need
to construct an \emph{intrinsic normal cone} $\frakC_f$ associated to
a morphism $f\colon \calX\rightarrow \calY$ of algebraic stacks and a
closed immersion $\frakC_f\rightarrow \frakE_f$ in a vector bundle
stack $\frakE_f$ over $\calX$. Locally $f$ factors as $p\circ i$, then we can define $\frakC_f$
locally as the quotient stack $[\sfrac{C_i}{i^*T_p}]$ and then glue
these together. To make the gluing process feasible, we construct an
\emph{intrinsic normal sheaf} $\frakN_f$ such that $\frakC_f$ is a
subcone of $\frakN_f$. The stack $\frakN_f$ is obtained globally via the
correspondence mentioned above as the stack associated to the
cotangent complex $L_f^\bullet$ of $f$ (\cite{laumon}, \cite{olsson},
\cite{LO}). Finally, the existence of a closed immersion
$\frakC_f\rightarrow \frakE_f$ is reduced, via the correspondence
above, to the existence of a \emph{perfect obstruction theory}.
\subsection{Obstructions for Artin stacks}
  The cotangent complex of an Artin stack has three terms, so that one
  cannot exploit directly the above correspondence to get the
  intrinsic normal sheaf. A first step in this direction was done by
  Francesco Noseda in his PhD thesis (\cite{noseda}), even though his
  construction was not completely proven to be \emph{intrinsic} and
  therefore may depend on the chosen resolution of the perfect
  obstruction theory (this point is crucial to prove the functoriality
  of the virtual fundamental class). To our knowledge, no construction
  of the virtual fundamental class of an Artin stack has been done so
  far.
\subsection{Virtual classes of Artin stacks}
  Let $f\colon \calX \rightarrow \calY$ be a morphism of algebraic
  stacks. We define the intrinsic normal sheaf $\frakN_f$ as the cone
  stack associated to the truncation $\tau_{[-1,0]}L_f^\bullet$ of the
  cotangent complex of $f$ (\ref{def_normal_sheaf}). We notice that
  there exists a smooth atlas $u\colon U \rightarrow \calX$ such that
  $f\circ u=p\circ i$ (\ref{remark_factorization}), then
  \[u^*\tau_{[-1,0]}L_f^\bullet=[\sfrac{\mathscr{I}}{\mathscr{I}^2}\rightarrow
  \ker(i^*\Omega_p\rightarrow \Omega_u)]\text{,}\] and hence
  $u^*\frakN_f=[\sfrac{N_i}{(\sfrac{i^*T_p}{T_u})}]$. As a consequence
  we can define the intrinsic normal cone $\frakC_f$, locally in the lisse-\'etale topology, as
  $[\sfrac{C_i}{(\sfrac{i^*T_p}{T_u})}]$
  (Proposition~\ref{prop_normal_cone}).  If $f$ is of Deligne-Mumford
  type then we can assume $u$ to be \'etale, therefore $\Omega_u=0$
  and our construction of the intrinsic normal cone above coincides
  with the one in \cite{BF}. Regarding obstruction theories, we prove
  an infinitesimal criterion based on the deformation theory of
  morphisms of stacks (Theorem~\ref{theorem_critot}). It should be
  mentioned that another crucial tool for the construction is
  intersection theory for Artin stacks: Chow groups for Artin stacks
  over a field are defined in \cite{kresch} and we verified that
  Kresch's theory naturally extends to stacks over a Dedekind
  domain. As a consequence we get that Manolache's construction of the
  virtual pullback in~\cite{manolache} applies to morphisms of Artin
  stacks over a Dedekind domain. In \cite{manolache} Manolache uses
  the virtual pullback to give a short proof of Costello's pushforward
  formula (\cite{costello} 5.0.1). Here we apply Manolache's
  construction to prove the pushforward formula in a more general
  setting (Proposition~\ref{prop_costellopf}).
\subsection{Possible applications}
The main motivation for this work comes from the following
applications: Gromow-Witten invariants of tame Artin stacks and
Donaldson-Thomas invariants in presence of semi-stable sheaves. In
both cases the relevant moduli stack is algebraic but not
Deligne-Mumford, therefore the construction presented here provides an
associated virtual fundamental class. We hope to return on these
points in a future paper.
\subsection*{Acknowledgements}
I would like to thank Prof. Burt Totaro for pointing out a mistake in
Remark~\ref{remark_resolution}: his valuable comments helped me
removing from section~\ref{section_ins} the assumption of $\calX$
admitting a stratification by global quotients.
\subsection*{Notations}
All stacks are Artin stacks in the sense of \cite{artin},
\cite{laumon} and are of finite type over a Dedekind domain. If
$\calX$ is an algebraic stack, we denote by $\calX_{\lisse}$ the
lisse-\'etale topos of $\calX$. If $F$ is a sheaf on $\calX_{\lisse}$,
we denote by $C(F)$ the associated abelian cone over $\calX$.
\section{The intrinsic normal cone}\label{section_ins}
\begin{parag}{\textbf{Abelian cone stacks.}}
  Let $\calX$ be an algebraic stack. A \emph{cone stack} over $\calX$
  is an algebraic $\calX$-stack $\frakC$ (together with a section and
  an $\bbA^1$-action) such that, lisse-\'etale locally on $\calX$, $\frakC$
  is the quotient stack $[\sfrac{C}{E}]$ of a cone $C$ by a vector
  bundle $E$ on $\calX$. A morphism of cone stacks is an
  $\bbA^1$-equivariant morphism of $\calX$-stacks. A $2$-isomorphism
  of cone stacks is an $\bbA^1$-equivariant $2$-isomorphism.  A cone
  stack $\frakC$ is \emph{abelian} if, lisse-\'etale locally on $\calX$, $\frakC \cong
  [\sfrac{C}{E}]$, where $C$ is an abelian cone. An abelian cone stack
  $\frakC$ is a \emph{vector bundle stack} if, lisse-\'etale locally on $\calX$, $\frakC
  \cong [\sfrac{C}{E}]$, where $C$ is a vector bundle (for further
  details see \cite{BF} 1.8--1.9). We denote by $\AbCones$ the
  $2$-category of abelian cone stacks over $\calX$.
\end{parag}
\begin{notat}
  Let $\CCoh$ be the category of complexes of coherent sheaves in the
  topos $\calX_{\lisse}$ with cohomology sheaves concentrated in
  degree $-1$ and $0$. Let $\Ccoh$ be the category of complexes of
  sheaves in the topos $\calX_{\lisse}$ with coherent cohomology
  concentrated in degrees $-1$, $0$. We can view $\CCoh$ and $\Ccoh$
  as $2$-categories, where the $2$-morphisms are homotopies (\cite{BF}
  2); we denote by $\DCoh$ and $\Dcoh$ the associated derived categories.
\end{notat}
\begin{remark}\label{remark_resolution}
  Let $U\rightarrow \calX$ be a smooth affine
  atlas of $\calX$, in particular $U$ has the resolution
  property. It follows that every complex $E^\bullet$ in $\DCoh$
  can be represented, locally in the lisse-\'etale topology on
  $\calX$, as a $2$-terms complex $[\hat{E}^{-1}\rightarrow
  \hat{E}^0]$ such that $\hat{E}^0$ is locally free.
\end{remark}
\begin{theorem}\label{theorem_deligne}
  There is an equivalence of categories \[\sfrac{h^1}{h^0}\colon \Dcoh
  \rightarrow \Ho \AbCones\] between $\Dcoh$ and the homotopy category
  of $\AbCones$, induced by the morphism
\[\hat{h} \colon
{\CCoh}^\opp \rightarrow \AbCones\] such that $\hat{h}([E^{-1}
\rightarrow E^0])= [\sfrac{C(E^{-1})}{C(E^0)}]$.
\end{theorem}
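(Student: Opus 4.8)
The plan is to adapt ``Deligne's theorem'' (as in \cite{BF} 2.7) from the \'etale site of a Deligne--Mumford stack to the lisse-\'etale topos of an Artin stack, reducing all global assertions to local ones through Remark~\ref{remark_resolution} and descent. Throughout one uses the identification $\DCoh\simeq\Dcoh$, which holds because $\calX$, being of finite type over a Dedekind domain, is noetherian.

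First I would set up $\hat h$ on complexes. For $E^\bullet=[E^{-1}\xrightarrow{d}E^0]$ in $\CCoh$ form the abelian cones $C(E^i)=\spec_{\calX}\sym E^i$; the differential induces a homomorphism of abelian group cones $C(d)\colon C(E^0)\to C(E^{-1})$ over $\calX$, one lets $C(E^0)$ act on $C(E^{-1})$ by translation through $C(d)$, and one sets $\hat h(E^\bullet)=[\sfrac{C(E^{-1})}{C(E^0)}]$. A morphism of complexes induces compatible morphisms of the presenting cones, hence a morphism of quotient stacks, and a homotopy $h\colon E^0\to F^{-1}$ gives $C(h)$, which exhibits a $2$-morphism between the morphisms of quotient stacks induced by homotopic chain maps; thus $\hat h$ is a $2$-functor factoring through homotopy classes. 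That $\hat h$ actually lands in $\AbCones$ follows from Remark~\ref{remark_resolution}: lisse-\'etale locally $E^\bullet$ is quasi-isomorphic to some $[\hat E^{-1}\to\hat E^0]$ with $\hat E^0$ locally free, so (by the next step) $\hat h(E^\bullet)$ is locally $2$-isomorphic to $[\sfrac{C(\hat E^{-1})}{C(\hat E^0)}]$ with $C(\hat E^0)$ a vector bundle, which is manifestly an abelian cone stack; being one is a local condition.

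Next I would show that $\hat h$ sends quasi-isomorphisms to $2$-isomorphisms, so that by the universal property of the localization it descends to the functor $\sfrac{h^1}{h^0}$ on $\Dcoh$. This is a local statement on $\calX_{\lisse}$, and over an atlas with the resolution property it reduces, by a zig-zag argument, to a few explicit cases — quasi-isomorphisms whose kernel or cokernel is of the form $[K^{-1}\xrightarrow{\ \sim\ }K^0]$ — where the required $2$-isomorphism is read directly off the action defining the quotient stack; the globalization uses that $\Dcoh$ and $\Ho\AbCones$ are stacks for the lisse-\'etale topology. For essential surjectivity, let $\frakC\in\AbCones$. Lisse-\'etale locally $\frakC\cong[\sfrac{C}{E}]$ with $C$ an abelian cone and $E$ a vector bundle; the anti-equivalence between abelian cones and coherent sheaves (\cite{BF} 1.9) writes $C=C(\calF)$ and $E=C(\mathcal{E})$ with $\mathcal{E}$ locally free and the $\bbA^1$-action corresponding to a morphism of coherent sheaves, so locally $\frakC\cong\hat h$ of a two-term complex; these local complexes, together with the transition data of $\frakC$, constitute a descent datum in $\Dcoh$ and glue to an object with $\sfrac{h^1}{h^0}$ isomorphic to $\frakC$. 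For full faithfulness I would compare, for $E^\bullet,F^\bullet$ in $\Dcoh$, the sheaf of morphisms in $\Dcoh$ with the sheaf of morphisms of the associated cone stacks in $\Ho\AbCones$: over an atlas with the resolution property the former is computed by chain maps modulo homotopy, the latter by $\bbA^1$-equivariant maps of the presenting cones modulo the $2$-morphisms arising from $C(-)$ of homotopies, and the two descriptions coincide by inspection.

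The main obstacle is the globalization: $\Dcoh$ does not glue as naively as a category of sheaves, so the substantive point is that the $2$-categorical gluing data defining an abelian cone stack correspond exactly to a descent datum of two-term complexes in $\Dcoh$ on the lisse-\'etale topos of $\calX$. This is precisely the content of ``Deligne's theorem'' in the Deligne--Mumford case; once this correspondence of gluing data is in place, essential surjectivity follows and the remaining verifications are the local computations indicated above.
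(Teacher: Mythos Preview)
Your outline is essentially correct and would prove the theorem, but it diverges from the paper's argument in one place that matters for the rest of the paper.

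For the implication ``quasi-isomorphism $\Rightarrow$ isomorphism of cone stacks'' you propose a zig-zag factorisation through maps whose kernel or cokernel is an acyclic two-term complex. The paper instead introduces the auxiliary sheaf $G=E^0\times_{F^0}F^{-1}$ and applies the Useful Criterion~\ref{useful_criterion} to obtain $[\sfrac{C(F^{-1})}{C(F^0)}]\cong[\sfrac{C(G)}{C(E^0)}]$, after which the map $\sfrac{h^1}{h^0}(\psi)$ is analysed via the single morphism $C(G)\to C(E^{-1})$. The payoff is that this argument simultaneously yields the finer dictionary
\[
h^0(\psi)\text{ surjective}\Longleftrightarrow \sfrac{h^1}{h^0}(\psi)\text{ representable},\qquad
h^0(\psi)\text{ iso},\ h^{-1}(\psi)\text{ surjective}\Longleftrightarrow \sfrac{h^1}{h^0}(\psi)\text{ closed immersion},
\]
and the paper explicitly invokes ``the proof of Theorem~\ref{theorem_deligne}'' in Remark~\ref{remark_obstr_theory} to conclude that an obstruction theory induces a closed immersion $\frakN_f\hookrightarrow\sfrac{h^1}{h^0}(\tau_{[-1,0]}E^\bullet)$. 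Your zig-zag argument gives only the quasi-isomorphism case and would not, as written, supply this intermediate statement.

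Two smaller points. First, the identification $\DCoh\simeq\Dcoh$ is not a one-line consequence of noetherianness; the paper proves it separately as Lemma~\ref{lemma_coh} by d\'evissage and the resolution property, and you should treat it likewise. Second, for full faithfulness the paper does more than ``inspection'': it reconstructs the homotopy $\varkappa$ from a given $2$-isomorphism $\xi$ by evaluating $\xi$ on the trivial torsor over each test scheme, which is the step that shows the map on $2$-morphisms is bijective rather than merely surjective. Your essential-surjectivity paragraph, on the other hand, matches the paper's: the local presentations of $\frakC$ literally assemble into an object of $\Dcoh$ because sheaves on $\calX_{\lisse}$ are by definition compatible systems on the site, so no separate descent theorem is needed.
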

\begin{remark}
  Let $[E^{-1}\rightarrow E^0]$, $[F^{-1}\rightarrow F^0]\in
  \CCoh$.  If $\varkappa\colon E^0 \rightarrow F^{-1}$ is a homotopy
  of morphisms $\psi, \phi\colon E^\bullet\rightarrow F^\bullet$, then
the $2$-morphism $\hat{h}(\varkappa) \colon
  \hat{h}(\psi) \rightarrow \hat{h}(\phi)$ is defined in the following
  way.  For every $\calX$-scheme $U$ and every $(P,f) \in
  \hat{h}(F^\bullet)(U)$, let $\{U_i\}$ be an open cover of $U$ such
  that $U_i \times_U P \cong U_i \times_{\calX} C(F^0)$,
  then \[\hat{h}(\varkappa)(U)(P,f) \colon \hat{h}(\psi)(U)(P,f)
  \rightarrow \hat{h}(\phi)(U)(P,f)\] is obtained by gluing the
  isomorphisms
  \[U_i \times_{\calX} C(E^0) \xrightarrow{(\id_{U_i}, C(\varkappa)
    \circ f_i|_{U_i\times_{\calX}\{0_F\}}\circ p_1 + p_2)} U_i
  \times_{\calX} C(E^0) \text{,}\] where $C(\varkappa)$ is the
  morphism of cones induced by $\varkappa$, $f_i=f|_{U_i\times_U P}$
  and $p_1$, $p_2$ are the natural projections. In particular
  $\hat{h}(\varkappa)$ is a $2$-isomorphism.
\end{remark}
\begin{crit}\label{useful_criterion}
Let $\psi \colon E^\bullet \rightarrow F^\bullet$ be a morphism in
$\CCoh$ that induces an isomorphism $C(F^0) \cong C(F^{-1})_{C(E^{-1})}C(E^{0})$ and a surjective morphism
\[C(d_E) + C(\psi^{-1}) \colon C(E^{-1})
\times_{\calX} C(F^{-1}) \rightarrow C(E^{-1})\text{,}\] then
$\hat{h}(\psi)$ is an isomorphism of cone stacks.
\end{crit}
\begin{proof}
  We can assume that $C(\psi^{-1})$ is surjective (since
  $\hat{h}(F^\bullet) \cong \hat{h}(F^\bullet \oplus
  E^{-1})$). Therefore the statement is implied by the following
  cartesian diagram
  \[
  \begin{tikzpicture}
    \def\x{3.6}
    \def\y{-1.8}
    \node (A0_0) at (0*\x, 0*\y) {$C(F^{-1}) \times_{\calX} C(F^0)$};
    \node (A0_2) at (2*\x, 0*\y) {$C(F^{-1})$};
    \node (A1_0) at (0*\x, 1*\y) {$C(F^{-1}) \times_{\calX} C(E^0)$};
    \node (A1_1) at (1.1*\x, 1*\y) {$C(E^{-1}) \times_{\calX} C(E^0)$};
    \node (A1_2) at (2*\x, 1*\y) {$C(E^{-1})$};
    \node (A2_0) at (0*\x, 2*\y) {$C(F^{-1})$};
    \node (A2_1) at (1.1*\x, 2*\y) {$C(E^{-1})$};
    \node (A2_2) at (2*\x, 2*\y) {$\hat{h}(E^\bullet)$};
    \path (A2_1) edge [->] node [auto] {$\scriptstyle{}$} (A2_2);
    \path (A1_0) edge [->] node [auto] {$\scriptstyle{}$} (A1_1);
    \path (A0_2) edge [->] node [auto] {$\scriptstyle{}$} (A1_2);
    \path (A1_1) edge [->] node [auto] {$\scriptstyle{}$} (A1_2);
    \path (A1_0) edge [->] node [auto] {$\scriptstyle{}$} (A2_0);
    \path (A0_0) edge [->] node [auto] {$\scriptstyle{}$} (A0_2);
    \path (A1_1) edge [->] node [auto] {$\scriptstyle{}$} (A2_1);
    \path (A0_0) edge [->] node [auto] {$\scriptstyle{}$} (A1_0);
    \path (A2_0) edge [->] node [auto] {$\scriptstyle{}$} (A2_1);
    \path (A1_2) edge [->] node [auto] {$\scriptstyle{}$} (A2_2);
  \end{tikzpicture}
  \]
(the upper square is cartesian because $C(F^{0}) = C(F^{-1})
  \times_{C(E^{-1})}C(E^0)$).
\end{proof}
\begin{lemma}\label{lemma_coh}
  The natural functor \[\DCoh \rightarrow
  \Dcoh\] is an equivalence of categories.
\end{lemma}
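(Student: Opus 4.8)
The plan is to establish that the functor is fully faithful and essentially surjective, both of which I would reduce to a comparison of $\Ext$-groups between coherent sheaves. To begin, observe that the functor is induced by the exact inclusion of $\Coh(\calX_\lisse)$ into the category of $\scrO_{\calX_\lisse}$-modules; since $\calX$ is noetherian this inclusion identifies $\Coh(\calX_\lisse)$ with a weak Serre subcategory (stable under kernels, cokernels and extensions), so it commutes with cohomology sheaves and with the truncations $\tau_{\leq 0}$, $\tau_{\geq -1}$, and $\DCoh\to\Dcoh$ is simply its restriction to the objects whose cohomology is concentrated in degrees $-1,0$. The ambient functor $\text{D}(\Coh(\calX_\lisse))\to\text{D}(\calX_\lisse)$ is triangulated, which is what I use in the dévissage below.

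For full faithfulness I would argue by dévissage: applying the triangles $\tau_{\leq -1}K\to K\to\tau_{\geq 0}K\xrightarrow{+1}$ in both variables and the five lemma, the bijectivity of the maps on $\Hom$-groups reduces to the bijectivity of the natural maps $\Ext^i_{\Coh(\calX_\lisse)}(\calF,\mathcal G)\to\Ext^i_{\calX_\lisse}(\calF,\mathcal G)$ for all coherent $\calF,\mathcal G$ and all $i\geq 0$, where $\Ext^i$ denotes the Yoneda group, that is, $\Hom$ in the ambient derived category. The case $i=0$ is trivial and $i=1$ holds because both groups classify extensions of $\calF$ by $\mathcal G$ and $\Coh(\calX_\lisse)$ is closed under extensions; the content lies in the range $i\geq 2$. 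For essential surjectivity, every $K\in\Dcoh$ sits in a triangle $\mathcal H^{-1}(K)[1]\to K\to\mathcal H^0(K)\xrightarrow{\delta}\mathcal H^{-1}(K)[2]$, so $K$ is the cone of its $k$-invariant $\delta\in\Ext^2_{\calX_\lisse}(\mathcal H^0(K),\mathcal H^{-1}(K))$; picking a preimage $\tilde\delta$ of $\delta$ under the comparison map, the cone of $\tilde\delta$ in $\text{D}(\Coh(\calX_\lisse))$ lies in $\DCoh$ by the long exact cohomology sequence and maps to an object isomorphic to $K$. Hence both halves follow once the $\Ext$-comparison maps are known to be bijective.

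To prove that comparison I would use Remark~\ref{remark_resolution}: lisse-\'etale--locally on $\calX$, namely on the smooth affine atlas, which has the resolution property, every coherent sheaf is a quotient of a locally free sheaf and therefore admits a resolution $\cdots\to\mathcal E^{-1}\to\mathcal E^0\to\calF\to 0$ by locally free, hence coherent, sheaves; such a resolution computes $\Ext(\calF,\mathcal G)$ in both $\Coh(\calX_\lisse)$ and the category of $\scrO_{\calX_\lisse}$-modules. One then globalizes by flat descent along the atlas $U_\bullet\to\calX$, identifying complexes of coherent $\calX_\lisse$-modules with cartesian complexes on the simplicial scheme $U_\bullet$ and matching the descent spectral sequences on both sides; alternatively, the equivalence $\text{D}^b(\Coh(\calX_\lisse))\simeq\text{D}^b_{\coh}(\calX_\lisse)$ can be extracted from the foundational treatment of the lisse-\'etale site of noetherian algebraic stacks (\cite{laumon}, \cite{olsson}). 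The main obstacle is exactly this globalization: an Artin stack carries no global resolution of coherent sheaves by vector bundles, so $\calF$ cannot be resolved on $\calX$ itself; and before descending along the atlas one should also check that an object of $\Dcoh$ is represented by a complex of quasi-coherent sheaves, so that the noetherian principle that every quasi-coherent sheaf is the filtered union of its coherent subsheaves becomes applicable.
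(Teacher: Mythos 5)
Your skeleton coincides with the paper's: full faithfulness by d\'evissage on cohomology sheaves, essential surjectivity by exhibiting $K$ as the cone of the connecting morphism $h^0(K)[-1]\to h^{-1}(K)[1]$ once full faithfulness supplies a preimage of that morphism, and the resolution property of a smooth affine atlas as the engine behind the $\Hom$-comparison. The essential-surjectivity half of your argument is essentially identical to the paper's.

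The step you yourself flag as the ``main obstacle'' --- globalizing the comparison $\Ext^i_{\Coh(\calX_{\lisse})}(\calF,\mathcal G)\to\Ext^i_{\calX_{\lisse}}(\calF,\mathcal G)$ from the atlas to $\calX$ --- is a genuine gap, and it is left open in your write-up: you do not construct the descent spectral sequences you invoke, and, as you correctly observe, coherent sheaves on an Artin stack admit no global locally free resolution, so the local computation does not transport for free. The paper sidesteps this entirely by inverting the order of operations: it asserts at the outset that the \emph{whole statement} (fully faithful plus essentially surjective) can be verified lisse-\'etale locally on $\calX$, and thereafter assumes $\calX$ itself has the resolution property, so no descent of $\Ext$-groups is ever performed. (That localization claim is itself delicate, since for objects of $\Dcoh$ the $\Hom$-presheaves need not be sheaves --- $\Hom(E^\bullet,F^\bullet[-1])$ can be nonzero --- but it replaces your unresolved globalization by a single assertion.) A second difference worth noting: the paper's d\'evissage is more economical than yours. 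Instead of needing the comparison for $\Ext^i$ of arbitrary coherent sheaves in the range $i\geq 2$ that you identify as the real content, it resolves the source sheaf by a two-term complex of locally free sheaves and then reduces by induction on the rank to $E=\scrO_{\calX}$, after which only $\Hom_{\DCoh}(\scrO_{\calX},F[n])$ for $n=-1,0$ must be matched with its counterpart in $\Dcoh$, both sides being $0$ and $\sez(\calX,F)$ respectively. To complete your version you must either adopt the paper's localization of the statement or actually carry out the descent argument you allude to, including the preliminary verification that objects of $\Dcoh$ are represented by complexes of quasi-coherent sheaves.
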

\begin{proof}
  The statement can be checked locally in the lisse-\'etale topology
  on $\calX$, therefore we can assume that $\calX$ has the resolution
  property. First we show that the functor is fully faithfull. Let
  $E^\bullet, F^\bullet \in \DCoh$, we want to show that the canonical
  map \[\Hom_{\DCoh}(E^\bullet, F^\bullet)\rightarrow
  \Hom_{\Dcoh}(E^\bullet, F^\bullet)\] is a bijection. We can assume
  $E^\bullet= [E^{-1}\xrightarrow{d_E}E^0]$ and $F^\bullet=
  [F^{-1}\xrightarrow{d_F}F^0]$. We can reduce to the case where
  $E^\bullet$ is a coherent sheaf $E$ (similarly $F^\bullet=F$), using
  the following distinguished
  triangle \[E^{-1}\xrightarrow{d_E}E^0\rightarrow
  E^\bullet\xrightarrow{+1}E^{-1}[1]\text{.}\] By resolution property,
  there exists a surjective morphism $\psi \colon P^0 \rightarrow E$
  from a locally free sheaf $P^0$; then $[\ker \psi\rightarrow P^0]$
  is a complex of locally free sheaves quasi-isomorphic to $E$ and
  using the argument above we can reduce to the case where $E^\bullet$
  is a locally free sheaf $E$. Let $E'=\sfrac{E}{\scrO_{\calX}}$, then
  $\rk E'<\rk E$, hence we can reduce to $E=\scrO_{\calX}$. That is,
  we are left to show that \[\Hom_{\DCoh}(\scrO_{\calX},
  F[n])\rightarrow \Hom_{\Dcoh}(\scrO_{\calX}, F[n])\] is a bijection
  for every coherent sheaf $F$ and $n=-1,0$. If $n=-1$, both groups
  are zero. If $n=0$ then both sides are $\sez (\calX, F)$.

  Let us show that every complex $E^\bullet\in
  \Dcoh$ is in the essential image. We
  can assume $E^\bullet= [E^{-1}\xrightarrow{d_E}E^0]$. We have the
  following exact sequence of complexes of sheaves \[0 \rightarrow
  h^{-1}(E^\bullet)[1]\rightarrow E^\bullet \rightarrow [\im d_E
  \rightarrow E^0]\rightarrow 0\text{,}\] which induces a
  distinguished triangle \[h^{-1}(E^\bullet)[1]\rightarrow E^\bullet
  \rightarrow [\im d_E \rightarrow
  E^0]\xrightarrow{+1}h^{-1}(E^\bullet)[2]\text{.}\] Notice that $[\im
  d_E \rightarrow E^0]= h^0(E^\bullet)$ in
  $\Dcoh$. Then we have a distinguished
  triangle \[h^{-1}(E^\bullet)[1]\rightarrow E^\bullet \rightarrow
  h^0(E^\bullet)\xrightarrow{+1}h^{-1}(E^\bullet)[2]\text{.}\] Since
  $h^0(E^\bullet)$ and $h^{-1}(E^\bullet)$ are coherent, the morphism
  $h^0(E^\bullet)[-1]\xrightarrow{+1}h^{-1}(E)[1]$ corresponds to a
  morphism $\psi \colon h^0(E^\bullet)[-1]\rightarrow h^{-1}(E)[1]$ in
  $\DCoh$. Completing $\psi$ to a
  distinguished triangle in $\DCoh$ and
  mapping it to $\Dcoh$, we deduce that
  $E^\bullet$ is quasi-isomorphic to the mapping cone of $\psi$, hence
  it is in the essential image.
\end{proof}
\begin{proof}[Proof of Theorem~\ref{theorem_deligne}]
  We need to prove the following facts: (1) $\psi\colon E^\bullet
  \rightarrow F^\bullet$ is a quasi-isomorphism if and only if
  $\hat{h}(\psi)$ is an isomorphism; (2) if $\psi,\phi \colon
  E^\bullet \rightarrow F^\bullet$ are morphisms of complexes and
  $\xi\colon \hat{h}(\psi)\rightarrow \hat{h}(\phi)$ is an
  $\bbA^1$-equivariant $2$-isomorphism then there exists a unique
  homotopy $\varkappa \colon \psi \rightarrow \phi$ such that
  $\xi=\hat{h}(\varkappa)$; (3) $\sfrac{h^1}{h^0}$ is essentially
  surjective. By Remark~\ref{remark_resolution} and
  Lemma~\ref{lemma_coh},we can assume $E^\bullet = [E^{-1}
  \xrightarrow{d_E} E^0]$ and $F^\bullet = [F^{-1} \xrightarrow{d_F}
  F^0]$ with $E^0$, $F^0$ locally free.

  For the second statement, we define a morphism $C(\varkappa) \colon
  C(F^{-1}) \rightarrow C(E^0)$ as follows. Let $T$ be an
  $\calX$-scheme and let $f \colon T \rightarrow C(F^{-1})$ be a
  morphism. Then $f$ defines an element $(P=T \times_{\calX}
  C(F^0), f_P)$ in $\hat{h}(F^\bullet)(T)$. The images
  $\hat{h}(\psi)(T)(P, f_P)$ and $\hat{h}(\phi)(T)(P, f_P)$ are
  trivial, hence $\xi(T)(P, f_P)$ corresponds to a morphism $g \colon
  T \rightarrow C(E^0)$. We define $C(\varkappa)(T)(f)=g$. Then
  $C(\varkappa)$ induces a homomorphism $\varkappa \colon E^0
  \rightarrow F^{-1}$. Moreover $\xi = \hat{h}(\varkappa)$ and
  $\varkappa$ is unique by construction.

  Let us now prove the first statement. Let $G= E^0
  \times_{F^0}F^{-1}$, then \[0 \rightarrow G \rightarrow E^0 \oplus
  F^{-1} \rightarrow F^0\] is exact. Notice that $E^0 \oplus F^{-1}
  \rightarrow F^0$ is surjective if and only if $h^0(\psi)$ is
  surjective. Let us assume that $h^0(\psi)$ is surjective, then we
  get an exact sequence of cones \[0 \rightarrow C(F^0) \rightarrow
  C(E^0) \times C(F^{-1}) \rightarrow C(G) \rightarrow 0\text{.}\]
  Applying the useful criterion~\ref{useful_criterion}, we obtain
  $[\sfrac{C(F^{-1})}{C(F^0)}] \cong [\sfrac{C(G)}{C(E^0)}]$, hence
  the following diagram
  \[
  \begin{tikzpicture}
    \def\x{2.8}
    \def\y{-1.6}
    \node (A0_0) at (0*\x, 0*\y) {$C(G)$};
    \node (A0_1) at (1*\x, 0*\y) {$C(E^{-1})$};
    \node (A1_0) at (0*\x, 1*\y) {$\sfrac{h^1}{h^0}(F^\bullet)$};
    \node (A1_1) at (1*\x, 1*\y) {$\sfrac{h^1}{h^0}(E^\bullet)$};
    \path (A0_0) edge [->] node [auto] {$\scriptstyle{}$} (A0_1);
    \path (A0_0) edge [->] node [auto] {$\scriptstyle{}$} (A1_0);
    \path (A0_1) edge [->] node [auto] {$\scriptstyle{}$} (A1_1);
    \path (A1_0) edge [->] node [auto] {$\scriptstyle{\sfrac{h^1}{h^0}(\psi)}$} (A1_1);
  \end{tikzpicture}
  \]
  is cartesian and in particular $\sfrac{h^1}{h^0}(\psi)$ is
  representable. If moreover $h^0(\psi)$ is an isomorphism and
  $h^{-1}(\psi)$ is surjective, then the morphism $E^{-1} \rightarrow
  G$ is surjective, hence $C(G) \rightarrow C(E^{-1})$ is a closed
  immersion, which implies that $\sfrac{h^1}{h^0}(\psi)$ is a closed
  immersion. If $h^{-1}(\psi)$ is also an isomorphism then $E^{-1}
  \rightarrow G$ is an isomorphism and so $C(G) \cong C(E^{-1})$; it
  follows that $\sfrac{h^1}{h^0}(\psi)$ is an isomorphism.  Viceversa,
  if $\sfrac{h^1}{h^0}(\psi)$ is representable then the induced
  morphism on automorphisms of objects is injective. Hence we have
  that the morphism \[C(h^0(\psi)) \colon C(h^0(F^\bullet))
  \rightarrow C(h^0(E^\bullet))\] is a closed immersion, which implies
  that $h^0(\psi)$ is surjective.  If moreover
  $\sfrac{h^1}{h^0}(\psi)$ is a closed immersion then $C(G)
  \rightarrow C(E^{-1})$ is a closed immersion, hence $E^{-1}
  \rightarrow G$ is surjective. It follows that $h^0(\psi)$ is
  injective and $h^{-1}(\psi)$ is surjective. Finally, if
  $\sfrac{h^1}{h^0}(\psi)$ is an isomorphism then $C(G) \cong
  C(E^{-1})$, hence $E^{-1} \cong G$, from which we get that
  $h^{-1}(\psi)$ is injective.  It follows that $\sfrac{h^1}{h^0}$ is
  fully faithfull.

  It remains to show that every abelian cone stack $\frakC$ over
  $\calX$ is in the essential image of $\sfrac{h^1}{h^0}$. By
  definition, lisse-\'etale locally on $\calX$, $\frakC\times \cong
  [\sfrac{C(E_U^{-1})}{C(E_U^0)}]$, where $E_U^{-1}$ is a coherent
  sheaf and $E_U^{0}$ is a locally free sheaf. The collection
  ${\{E_U^{-1}\rightarrow E_U^0\}}_U$ defines a complex
  $[E^{-1}\rightarrow E^0]\in \Dcoh$.
\end{proof}
\begin{parag}{\textbf{Cotangent Complex} [\cite{laumon} 17.3,
    \cite{olsson}, \cite{LO}
    2.2.5]\textbf{.}}\label{cotangent_complex}
  Let $f \colon \calX \rightarrow \calY$ be a quasi-compact and
  quasi-separated morphism of algebraic stacks. There exists
  $L_f^\bullet \in D_\qcoh^{\leq 1}(\calX_{\lisse})$ such that
\begin{enumerate}
\item $f$ is of Deligne-Mumford type if and only if $L_f^\bullet \in
  D_\qcoh^{\leq 0}(\calX_{\lisse})$;
\item $f$ is of Deligne-Mumford type and smooth if and only if $L_f^\bullet=\Omega_f$;
\item for every commutative diagram
  \[
  \begin{tikzpicture}
    \def\x{1.5}
    \def\y{-1.2}
    \node (A0_0) at (0*\x, 0*\y) {$\calX'$};
    \node (A0_1) at (1*\x, 0*\y) {$\calY'$};
    \node (A1_0) at (0*\x, 1*\y) {$\calX$};
    \node (A1_1) at (1*\x, 1*\y) {$\calY$};
    \path (A0_0) edge [->] node [auto] {$\scriptstyle{f'}$} (A0_1);
    \path (A0_0) edge [->] node [left] {$\scriptstyle{g}$} (A1_0);
    \path (A0_1) edge [->] node [auto] {$\scriptstyle{h}$} (A1_1);
    \path (A1_0) edge [->] node [auto] {$\scriptstyle{f}$} (A1_1);
  \end{tikzpicture}
  \]
  there exists a morphism $Lg^*L_f^\bullet \rightarrow
  L_{f'}^\bullet$; if the diagram is cartesian and either $h$ of $f$
  is flat, this morphism is an isomorphism;
\item given two morphisms of $S$-stacks $\calX \xrightarrow{f} \calY
  \xrightarrow{g}Z$ with $h=g \circ f$, there exists a natural
  distinguished triangle \[Lf^* L_g^\bullet \rightarrow L_h^\bullet
  \rightarrow L_f^\bullet \rightarrow Lf^*L_g^\bullet[1]\text{;}\]
\item if $f$ factors as $\calX \xrightarrow{i} M \xrightarrow{p} \calY$ with
  $i$ representable and a closed embedding with ideal sheaf $\mathscr{I}$ and $p$ of Deligne-Mumford
  type and smooth, then \[\tau_{\geq -1}L_f^\bullet \cong
  [\sfrac{\mathscr{I}}{\mathscr{I}^2} \rightarrow i^*\Omega_p]\text{.}\]
\end{enumerate}
\end{parag}
\begin{definit}\label{def_normal_sheaf}
  Let $f \colon \calX \rightarrow \calY$ be a morphism of
  algebraic stacks. We define the
  \emph{intrinsic normal sheaf} of $f$ as the abelian cone
  stack $\frakN_f = \sfrac{h^1}{h^0}(\tau_{[-1,0]}L_f^\bullet)$.
\end{definit}
\begin{definit}
A \emph{local embedding} of $f\colon \calX \rightarrow \calY$ is a commutative diagram
  \[
  \begin{tikzpicture}[xscale=1.5,yscale=-1.2]
    \node (A0_0) at (0, 0) {$U$};
    \node (A0_1) at (1, 0) {$\calX$};
    \node (A1_0) at (0, 1) {$M$};
    \node (A1_1) at (1, 1) {$\calY$};
    \path (A0_0) edge [->]node [auto] {$\scriptstyle{u}$} (A0_1);
    \path (A1_0) edge [->]node [auto] {$\scriptstyle{p}$} (A1_1);
    \path (A0_1) edge [->]node [auto] {$\scriptstyle{f}$} (A1_1);
    \path (A0_0) edge [right hook->]node [left] {$\scriptstyle{i}$} (A1_0);
  \end{tikzpicture}
  \]
where $u$, $p$ are smooth and representable and $i$ is a closed embedding.
\end{definit}
\begin{remark}\label{remark_factorization}
  Notice that there exists a local embedding of $f$. More explicitly,
  let $V$ be a smooth atlas for $\calY$ and let $U$ be an affine
  scheme which is a smooth atlas for $\calX \times_{\calY} V$. In
  particular there exists a closed embedding $j \colon U
  \hookrightarrow \bbA^n$. Let us set $M=\bbA_V^n$, then $f\circ u$
  factors as $U \xrightarrow{i} M \xrightarrow{p} \calY$, where $i$ is
  a closed embedding and $p$ is smooth and representable.
  Applying \ref{cotangent_complex} (4)--(5) to $f\circ u=p\circ i$ and snake lemma,
% , there is a distinguished
  % triangle \[Lu^* L_f^\bullet \rightarrow L_{p\circ i}^\bullet
  % \rightarrow L_u^\bullet \rightarrow Lu^*L_f^\bullet[1]\text{,}\]
  % which induces an isomorphism $h^{-1}(Lu^*L_f^\bullet)\cong
  % h^{-1}(L_{p\circ i}^\bullet)$ and an exact sequence \[0 \rightarrow
  % h^0(Lu^*L_f^\bullet)\rightarrow h^0(L_{p\circ i}^\bullet)\rightarrow
  % \Omega_u\rightarrow h^1(Lu^*L_f^\bullet)\rightarrow 0\text{.}\] By
  % \ref{cotangent_complex} (5), there is an exact
  % sequence \[0\rightarrow h^{-1}(L_{p\circ i}^\bullet)\rightarrow
  % \sfrac{\mathscr{I}}{\mathscr{I}^2}\rightarrow i^*\Omega_p\rightarrow
  % h^0(L_{p\circ i}^\bullet)\rightarrow 0\] ($\mathscr{I}$ being the ideal
  % sheaf corresponding to $i$). Combining these two exact sequences and
  % applying snake lemma,
  we get a
  quasi-isomorphism \[\tau_{[-1,0]}Lu^*L_f^\bullet\rightarrow
  \tau_{[-1,0]}[\sfrac{\mathscr{I}}{\mathscr{I}^2}\rightarrow
  i^*\Omega_p\rightarrow \Omega_u]\] ($\mathscr{I}$ being the ideal sheaf corresponding to $i$), which, by
  Theorem~\ref{theorem_deligne}, implies
  \[u^*\frakN_f=\sfrac{h^1}{h^0}(\tau_{[-1,0]}[\sfrac{\mathscr{I}}{\mathscr{I}^2}\rightarrow
  i^*\Omega_p\rightarrow \Omega_u])
  =[\sfrac{N_i}{(\sfrac{i^*T_p}{T_u})}]\text{,}\] where $N_i=
  C(\sfrac{\mathscr{I}}{\mathscr{I}^2})$ is the normal sheaf of $i$.%  Notice that the normal cone
  % $C_i=\spec(\bigoplus_{n\geq 0}\sfrac{\mathscr{I}^n}{\mathscr{I}^{n+1}})$ is a closed
  % subcone of $N_i$.
\end{remark}
\begin{prop}\label{prop_normal_cone}
  There exists a unique closed subcone stack $\frakC_f \subseteq
  \frakN_f$ such that
  \[u^*\frakC_f=[\sfrac{C_i}{(\sfrac{i^*T_p}{T_u})}]\text{,}\] for every local
  embedding of $f$ ($C_i$ being the normal cone of $i$).  If moreover $\calY$ is purely dimensional of pure
  dimension $n$, then $\frakC_f$ is purely dimensional of pure
  dimension $n$.
\end{prop}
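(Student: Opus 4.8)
The plan is to build $\frakC_f$ by gluing local models. Given a local embedding $(U,i,M,p)$ of $f$, with $\mathscr{I}$ the ideal sheaf of $i$, the graded surjection $\sym(\sfrac{\mathscr{I}}{\mathscr{I}^2})\twoheadrightarrow\bigoplus_{k\geq 0}\sfrac{\mathscr{I}^k}{\mathscr{I}^{k+1}}$ exhibits the normal cone $C_i$ as a closed $\bbA^1$-invariant subcone of $N_i=C(\sfrac{\mathscr{I}}{\mathscr{I}^2})$; as in \cite{BF} this inclusion is equivariant for the action of $\sfrac{i^*T_p}{T_u}$, so that, using the description $u^*\frakN_f=[\sfrac{N_i}{(\sfrac{i^*T_p}{T_u})}]$ from Remark~\ref{remark_factorization}, the quotient $\frakC_U:=[\sfrac{C_i}{(\sfrac{i^*T_p}{T_u})}]$ is a closed subcone stack of $u^*\frakN_f$. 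By Remark~\ref{remark_factorization} a local embedding exists whose source $U$ is a smooth atlas of $\calX$, and closed subcone stacks of $\frakN_f$ satisfy smooth descent, so it suffices to show the $\frakC_U$ are compatible under pullback. I would deduce this from one functoriality statement.

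\emph{Key step: if $(U',i',M',p')$ and $(U,i,M,p)$ are local embeddings of $f$ and $g\colon U'\to U$ over $\calX$, $q\colon M'\to M$ over $\calY$ satisfy $q$ smooth and $q\circ i'=i\circ g$, then $\frakC_{U'}=g^*\frakC_U$ inside $u'^*\frakN_f=g^*u^*\frakN_f$.} To prove it I would set $W=M'\times_M U$. Since $q$ is smooth, $W\to U$ is smooth and $W\hookrightarrow M'$, being the base change of $i$ along $q$, is a closed embedding with $C_{W/M'}=q^*C_{U/M}$; and $U'\hookrightarrow W$ (via $(i',g)$) is a closed embedding into a scheme smooth over $U$, hence regular with normal bundle $\sfrac{i'^*T_q}{T_{U'/U}}$. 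The tower $U'\hookrightarrow W\hookrightarrow M'$ then yields an exact sequence of cones
\[0\to\sfrac{i'^*T_q}{T_{U'/U}}\to C_{U'/M'}\to g^*C_{U/M}\to 0\text{,}\]
while the relative cotangent sequences of $p'=p\circ q$ and $u'=u\circ g$, dualised and restricted to $U'$, give a parallel exact sequence
\[0\to\sfrac{i'^*T_q}{T_{U'/U}}\to\sfrac{i'^*T_{p'}}{T_{u'}}\to g^*\Bigl(\sfrac{i^*T_p}{T_u}\Bigr)\to 0\]
with the same sub-bundle and compatible actions. Dividing $C_{U'/M'}$ first by $\sfrac{i'^*T_q}{T_{U'/U}}$ and then by $g^*(\sfrac{i^*T_p}{T_u})$ produces $g^*\frakC_U$.

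Granting the Key step, compatibility follows in two moves. First, for fixed $u\colon U\to\calX$ and two local embeddings $(U,i_a,M_a,p_a)$, $a=1,2$, put $M_3=M_1\times_\calY M_2$ with $i_3=(i_1,i_2)$ — a closed embedding, since $\mathrm{pr}_2\circ i_3=i_2$ is one and $M_3\to M_2$ is separated — and $p_3$ its structure morphism; the Key step applied to the two projections $M_3\to M_a$ shows the three local models agree, so $\frakC_U$ does not depend on $(i,M,p)$. Second, given $(U,i,M,p)$ and a smooth $\calX$-morphism $g\colon U'\to U$, choose a closed $U$-embedding $U'\hookrightarrow\bbA^l_U$ and take the local embedding with source $U'$, ambient $M'=\bbA^l_M$, closed embedding $U'\hookrightarrow\bbA^l_U\hookrightarrow\bbA^l_M$, and structure map $\bbA^l_M\to M\xrightarrow{p}\calY$; the Key step with $q\colon\bbA^l_M\to M$ the projection gives $\frakC_{U'}=g^*\frakC_U$. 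Combining the two moves, the $\frakC_U$ glue to a closed subcone stack $\frakC_f\subseteq\frakN_f$; for an arbitrary local embedding one refines its source by the covering atlas and reapplies the two moves to obtain $u^*\frakC_f=[\sfrac{C_i}{(\sfrac{i^*T_p}{T_u})}]$, and uniqueness is immediate since $\frakC_f$ is determined by its pullback to a smooth cover.

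For the dimension claim, assume $\calY$ is pure of dimension $n$. Purity and dimension may be checked after smooth base change, so take a local embedding built as in Remark~\ref{remark_factorization} and shrink so that $V\to\calY$ and $u\colon U\to\calX$ have constant relative dimensions $r$ and $s$; then $M=\bbA^N_V$ is pure of dimension $n+r+N$, hence $C_i=C_{U/M}$ is pure of dimension $n+r+N$ (the normal cone of a closed subscheme of a pure-dimensional scheme has the same pure dimension), while $\sfrac{i^*T_p}{T_u}$ is locally free of rank $(r+N)-s$; therefore $u^*\frakC_f=[\sfrac{C_i}{(\sfrac{i^*T_p}{T_u})}]$ is pure of dimension $(n+r+N)-(r+N-s)=n+s$, and since $u$ is smooth of relative dimension $s$ it follows that $\frakC_f$ is pure of dimension $n$. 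The main obstacle is the Key step: one has to check that $C_i$ lies in $N_i$ compatibly with the $\sfrac{i^*T_p}{T_u}$-action — so that $\frakC_U$ is genuinely a closed subcone stack of $u^*\frakN_f$, where Theorem~\ref{theorem_deligne} and Remark~\ref{remark_factorization} serve to identify $u^*\frakN_f$ — and that the two displayed exact sequences share the same sub-bundle with compatible actions, so that the vector-bundle quotients cancel. Once this bookkeeping is done, the gluing and the dimension count are routine.
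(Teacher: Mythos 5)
Your proposal is correct and follows essentially the same route as the paper: construct the local model $[\sfrac{C_i}{(\sfrac{i^*T_p}{T_u})}]$ as a closed subcone of $u^*\frakN_f$, prove independence of the ambient space for a fixed source via the fibre product $M_1\times_{\calY}M_2$, prove compatibility under smooth change of the source via an auxiliary fibre product over $M$, glue by smooth descent, and do the same dimension count; your single ``key step'' merely packages the paper's two separate comparisons, both of which the paper settles by citing Fulton, Example~4.2.6. The one point to make explicit is that your exact sequence of cones for the tower $U'\hookrightarrow W\hookrightarrow M'$ is not the standard smooth-projection lemma verbatim (here the middle map is a closed embedding, not a smooth morphism), so it needs the local reduction — choose, locally near $U'$, a closed subscheme $M''\subseteq M'$ smooth over $M$ with $U'=U\times_M M''$ and a smooth retraction of $M'$ onto $M''$ — which is precisely the Fulton~4.2.6 input the paper invokes.
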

\begin{proof}
  By Remark~\ref{remark_factorization}, there exists a local embedding
  $f\circ u=p\circ i$ of $f$ and
  $u^*\frakN_f=[\sfrac{N_i}{(\sfrac{i^*T_p}{T_u})}]$. By \cite{BF}
  3.2, the action of $i^*T_p$ on $N_i$ leaves $C_i$ invariant, hence
  we can define the quotient stack
  $[\sfrac{C_i}{(\sfrac{i^*T_p}{T_u})}]$ which is a closed subcone
  stack of $u^*\frakN_f$ (since $C_i$ is a closed subcone of
  $N_i$). Let us show that $[\sfrac{C_i}{(\sfrac{i^*T_p}{T_u})}]$ does
  not depend on the factorization $p\circ i$ chosen. Let $U
  \xrightarrow{i'} M' \xrightarrow{p'} \calY$ be another factorization
  of $f\circ u$. It is enough to check
  $[\sfrac{C_i}{(\sfrac{i^*T_p}{T_u})}]=[\sfrac{C_j}{(\sfrac{j^*T_{p\circ
        q}}{T_u})}]$ as closed substacks of $u^*\frakN_f$, where $U
  \xrightarrow{j} M'\times_{\calY} M\xrightarrow{q} M$ is the induced
  local embedding. By Remark~\ref{remark_factorization},
  both $N_i$ and $N_j$ are smooth atlases of $u^*\frakN_f$. We get the
  following commutative diagram
  \[
  \begin{tikzpicture}
    \def\x{2.0}
    \def\y{-1.4}
    \node (A0_0) at (0*\x, 0*\y) {$C_j$};
    \node (A1_0) at (0*\x, 1*\y) {$C_i$};
    \node (A0_1) at (1*\x, 0*\y) {$N_j$};
    \node (A0_2) at (2*\x, 0*\y) {$u^*\frakN_f$};
    \node (A1_1) at (1*\x, 1*\y) {$N_i$};
    \path (A0_1) edge [->] node [auto] {$\scriptstyle{\alpha_j}$} (A0_2);
    \path (A0_0) edge [->] node [auto] {$\scriptstyle{}$} (A0_1);
    \path (A1_0) edge [->] node [auto] {$\scriptstyle{}$} (A1_1);
    \path (A0_0) edge [->] node [auto] {$\scriptstyle{}$} (A1_0);
    \path (A1_1) edge [->] node [below] {$\scriptstyle{\alpha_i}$} (A0_2);
    \path (A0_1) edge [->] node [left] {$\scriptstyle{\phi}$} (A1_1);
  \end{tikzpicture}
  \]
  where the left square is cartesian by \cite{fulton},
  Example~4.2.6; therefore it is enough to show that the inverse
  images of $[\sfrac{C_i}{(\sfrac{i^*T_p}{T_u})}]$ and
  $[\sfrac{C_j}{(\sfrac{j^*T_{p\circ q}}{T_u})}]$ in $N_j$ are the
  same. We
  have \[\phi^{-1}(\alpha_i^{-1}([\sfrac{C_i}{(\sfrac{i^*T_p}{T_u})}]))=
  \phi^{-1}(C_i)=C_j = \alpha_j^{-1}([\sfrac{C_j}{(\sfrac{j^*T_{p\circ
        q}}{T_u})}])\text{.}\]

  Let $f\circ u'=p'\circ i'$ be another local embedding of $f$, with
  $u'\colon U'\rightarrow \calX$. We want to show that the cone stacks
  $[\sfrac{C_i}{(\sfrac{i^*T_p}{T_u})}]$ and
  $[\sfrac{C_{i'}}{(\sfrac{i'^*T_{p'}}{T_{u'}})}]$ agree on $V=U
  \times_{\calX} U'$. We can find a closed embedding $j\colon V
  \rightarrow M''$ and smooth morphisms of schemes $q\colon
  M''\rightarrow M$, $q'\colon M''\rightarrow M'$ such that $i\circ
  v=q\circ j$, $i'\circ v'=q'\circ j$, where $v\colon V\rightarrow U$,
  $v'\colon V\rightarrow U'$ are the projections. It is enough to show
  that
  $v^*[\sfrac{C_i}{(\sfrac{i^*T_p}{T_u})}]=[\sfrac{C_j}{(\sfrac{j^*T_{p\circ
        q}}{T_{u\circ v}})}]$.  By
  Remark~\ref{remark_factorization}, \[v^*u^*\frakN_f=[\sfrac{N_j}{(\sfrac{j^*T_{p\circ
        q}}{T_{u\circ
        v}})}]=v^*[\sfrac{N_i}{(\sfrac{i^*T_{p}}{T_{u}})}]\] and the
  following diagram
  \[
  \begin{tikzpicture}
    \def\x{2.0}
    \def\y{-1.4}
    \node (A0_0) at (0*\x, 0*\y) {$N_j$};
    \node (A0_1) at (1*\x, 0*\y) {$v^*u^*\frakN_f$};
    \node (A1_0) at (0*\x, 1*\y) {$v^*N_i$};
    \path (A0_0) edge [->] node [auto] {$\scriptstyle{\alpha_j}$} (A0_1);
    \path (A1_0) edge [->] node [below] {$\scriptstyle{\alpha_i}$} (A0_1);
    \path (A0_0) edge [->] node [left] {$\scriptstyle{\phi}$} (A1_0);
  \end{tikzpicture}
  \]
  is commutative; therefore it is enough to show that the inverse
  images of $v^*[\sfrac{C_i}{(\sfrac{i^*T_p}{T_u})}]$ and
  $[\sfrac{C_j}{(\sfrac{j^*T_{p\circ q}}{T_{u\circ v}})}]$ in $N_j$
  are the same. By Example~4.2.6 in \cite{fulton}, $\phi^{-1}(v^*C_i)=C_j$
  and
  hence \[\phi^{-1}(\alpha_i^{-1}(v^*[\sfrac{C_i}{(\sfrac{i^*T_p}{T_u})}]))=
  \phi^{-1}(v^*C_i)=C_j = \alpha_j^{-1}([\sfrac{C_j}{(\sfrac{j^*T_{p\circ
        q}}{T_{u\circ v}})}])\text{.}\]

  Finally, let us assume that $\calY$ is purely dimensional of pure
  dimension $n$. Let $f\circ u=p\circ i$ be a local embedding of $f$,
  then $u^*\frakC_f = [\sfrac{C_i}{(\sfrac{i^*T_p}{T_u})}]$. We can
  assume that $M$ is purely dimensional of pure dimension $m$ and $p$,
  $u$ are smooth of relative dimension $m-n$ (see the construction in
  Remark~\ref{remark_factorization}). By \cite{fulton}, B.6.6, we have
  that $C_i$ is purely dimensional of pure dimension $m$. Moreover, we
  have the following cartesian diagram
  \[
  \begin{tikzpicture}
    \def\x{2.0}
    \def\y{-1.4}
    \node (A0_0) at (0*\x, 0*\y) {$(\sfrac{i^*T_p}{T_u}) \times C_i$};
    \node (A0_1) at (1*\x, 0*\y) {$C_i$};
    \node (A1_0) at (0*\x, 1*\y) {$C_i$};
    \node (A1_1) at (1*\x, 1*\y) {$[\sfrac{C_i}{(\sfrac{i^*T_p}{T_u})}]$};
    \path (A0_0) edge [->] node [auto] {$\scriptstyle{}$} (A0_1);
    \path (A0_0) edge [->] node [auto] {$\scriptstyle{}$} (A1_0);
    \path (A0_1) edge [->] node [auto] {$\scriptstyle{}$} (A1_1);
    \path (A1_0) edge [->] node [auto] {$\scriptstyle{}$} (A1_1);
  \end{tikzpicture}
  \]
  from which we get that $C_i \rightarrow
  [\sfrac{C_i}{(\sfrac{i^*T_p}{T_u})}]$ is surjective and smooth of
  relative dimension $0$. It follows that $u^*\frakC_f$ is purely
  dimensional of pure dimension $m$ and hence $\frakC_f$ is purely
  dimensional of pure dimension $n$.
\end{proof}
\begin{definit}
The closed subcone stack $\frakC_f$ of $\frakN_f$ is called the
\emph{intrinsic normal cone} of $f$.
\end{definit}
\begin{prop}\label{prop_cone_basechange}
Consider the following commutative diagram of algebraic stacks
  \begin{equation}\label{eq:base_change}
  \begin{tikzpicture}[baseline=-20pt]
    \def\x{1.5}
    \def\y{-1.2}
    \node (A0_0) at (0*\x, 0*\y) {$\calX'$};
    \node (A0_1) at (1*\x, 0*\y) {$\calY'$};
    \node (A1_0) at (0*\x, 1*\y) {$\calX$};
    \node (A1_1) at (1*\x, 1*\y) {$\calY$};
    \path (A0_0) edge [->] node [auto] {$\scriptstyle{f'}$} (A0_1);
    \path (A1_0) edge [->] node [auto] {$\scriptstyle{f}$} (A1_1);
    \path (A0_1) edge [->] node [auto] {$\scriptstyle{h}$} (A1_1);
    \path (A0_0) edge [->] node [left] {$\scriptstyle{g}$} (A1_0);
  \end{tikzpicture}
  \end{equation}
  Then there exists a natural morphism $\alpha \colon \frakC_{f'}
  \rightarrow g^*\frakC_f$ such that
\begin{enumerate}
\item if \eqref{eq:base_change} is cartesian then $\alpha$ is a
  closed immersion;
\item if moreover the morphism $h$ is flat then $\alpha$ is an
  isomorphism.
\end{enumerate}
\end{prop}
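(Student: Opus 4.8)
The plan is to define $\alpha$ on compatible local embeddings, to check — exactly as for $\frakC_f$ in Proposition~\ref{prop_normal_cone} — that the construction is independent of the choices and glues, and then to obtain (1) and (2) from the classical behaviour of normal cones under base change.

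First I would fix compatible local embeddings of $f$ and $f'$. Following the construction of Remark~\ref{remark_factorization}, carried out over a smooth atlas $V'\to\calY'$ mapping to a smooth atlas $V\to\calY$, one produces a local embedding $f\circ u=p\circ i$ of $f$ and a local embedding $f'\circ u'=p'\circ i'$ of $f'$ fitting into a commutative prism, together with a smooth morphism $\pi\colon U'\to U$ satisfying $g\circ u'=u\circ\pi$ and a morphism $\bar p\colon M'\to M$ over $h$ satisfying $\bar p\circ i'=i\circ\pi$. Since $\bar p\circ i'=i\circ\pi$ factors through the closed embedding $i$, the morphism $i'$ factors through $\bar p^{-1}(U)$, and combining the canonical morphism $C_{i'}=C_{U'/M'}\to C_{\bar p^{-1}(U)/M'}$ attached to the chain $U'\hookrightarrow\bar p^{-1}(U)\hookrightarrow M'$ with the closed immersion $C_{\bar p^{-1}(U)/M'}\hookrightarrow C_i\times_{U}\bar p^{-1}(U)$ of \cite{fulton}, Example~4.2.6, and pulling back over $U'$, one obtains a natural morphism of cones $C_{i'}\to\pi^*C_i$. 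The differentials of $\pi$ and $\bar p$ induce a morphism of vector bundles $\sfrac{i'^*T_{p'}}{T_{u'}}\to\pi^*(\sfrac{i^*T_p}{T_u})$ intertwining the actions on $C_{i'}$ and on $\pi^*C_i$, so passing to quotient stacks yields $[\sfrac{C_{i'}}{(\sfrac{i'^*T_{p'}}{T_{u'}})}]\to\pi^*[\sfrac{C_i}{(\sfrac{i^*T_p}{T_u})}]$, that is, $u'^*\frakC_{f'}\to u'^*g^*\frakC_f$. Rerunning the comparison argument of Proposition~\ref{prop_normal_cone} in this relative setting — passing to a common refinement of two compatible local embeddings and using \cite{fulton}, Example~4.2.6 — shows that this morphism is independent of the chosen local embeddings and descends to a natural morphism $\alpha\colon\frakC_{f'}\to g^*\frakC_f$.

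For~(1), assume~\eqref{eq:base_change} is cartesian. Then one may take the local embedding of $f'$ to be the base change of that of $f$: set $U'=U\times_{\calX}\calX'$ and $M'=M\times_{\calY}\calY'$, with $i'$, $p'$ and $\bar p$ the base changes of $i$, $p$ and the projection $M'\to M$; this is a local embedding in the sense of the definition, where $u'$ and $p'$ are only required to be smooth and representable. Now $U'=\bar p^{-1}(U)$ scheme-theoretically, so the morphism $C_{i'}\to\pi^*C_i$ above is precisely the closed immersion $C_{\bar p^{-1}(U)/M'}\hookrightarrow C_i\times_{U}\bar p^{-1}(U)$ of \cite{fulton}, Example~4.2.6; moreover $\sfrac{i'^*T_{p'}}{T_{u'}}\to\pi^*(\sfrac{i^*T_p}{T_u})$ is an isomorphism, because relative (co)tangent sheaves commute with base change and $p$, $u$ are smooth. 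Hence, after the smooth surjective base change $u'\colon U'\to\calX'$, $\alpha$ becomes a closed immersion of quotient stacks, so $\alpha$ is a closed immersion; this is~(1). For~(2), if in addition $h$ is flat then so is its base change $\bar p$, the normal cone commutes with flat pullback, whence $C_{\bar p^{-1}(U)/M'}\cong C_i\times_{U}\bar p^{-1}(U)$ and $\alpha$ becomes an isomorphism after the base change $u'$; hence $\alpha$ is an isomorphism.

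The step I expect to be the main obstacle is the well-definedness and gluing of $\alpha$: this amounts to reproducing, in the present relative situation, the comparison of compatible local embeddings through a common refinement carried out for $\frakC_f$ in Proposition~\ref{prop_normal_cone}, together with the verification that the morphism $C_{i'}\to\pi^*C_i$ intertwines the actions of the two vector bundle quotients. Once this is in place, parts~(1) and~(2) follow at once from the standard behaviour of normal cones under scheme-theoretic preimage and flat base change.
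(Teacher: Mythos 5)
Your proposal is correct and follows essentially the same route as the paper: choose compatible local embeddings with $M'=M\times_{\calY}\calY'$, build the cone map $C_{i'}\to \pi^*C_i$ from the natural surjection of ideals (the paper cites \cite{fulton} B.6, which is the same content as your use of Example 4.2.6 through $\bar p^{-1}(U)$), combine it with the induced maps on $i'^*T_{p'}/T_{u'}$, and pass to quotient stacks, with (1) and (2) following from the scheme-theoretic-preimage and flat-base-change properties of normal cones. If anything, you are more explicit than the paper about the well-definedness and gluing step, which the paper leaves implicit.
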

\begin{proof}
  Let $U\xrightarrow{u} \calX$ and $U' \xrightarrow{u'}\calX'$ be
  smooth affine atlases with a morphism $v \colon U' \rightarrow U$
  such that $u\circ v=g \circ u'$. There exist a commutative diagram
  \[
  \begin{tikzpicture}
    \def\x{1.8}
    \def\y{-1.4}
    \node (A0_0) at (0*\x, 0*\y) {$U'$};
    \node (A0_1) at (1*\x, 0*\y) {$M'$};
    \node (A0_2) at (2*\x, 0*\y) {$\calY'$};
    \node (A1_0) at (0*\x, 1*\y) {$U$};
    \node (A1_1) at (1*\x, 1*\y) {$M$};
    \node (A1_2) at (2*\x, 1*\y) {$\calY$};
    \path (A0_0) edge [->] node [auto] {$\scriptstyle{i'}$} (A0_1);
    \path (A0_1) edge [->] node [auto] {$\scriptstyle{\widetilde{h}}$} (A1_1);
    \path (A1_0) edge [->] node [auto] {$\scriptstyle{i}$} (A1_1);
    \path (A0_2) edge [->] node [auto] {$\scriptstyle{h}$} (A1_2);
    \path (A1_1) edge [->] node [auto] {$\scriptstyle{p}$} (A1_2);
    \path (A0_0) edge [->] node [auto] {$\scriptstyle{\widetilde{g}}$} (A1_0);
    \path (A0_1) edge [->] node [auto] {$\scriptstyle{p'}$} (A0_2);
  \end{tikzpicture}
  \]
  where $i$, $i'$ are closed embeddings, $p$, $p'$ are representable
  smooth and $M'=M\times_{\calY}\calY'$. If \eqref{eq:base_change} is
  cartesian, we can take $U'=U\times_M M'=U\times_{\calX}\calX'$.

  By~\ref{cotangent_complex} (3), there is an isomorphism $T_{p'}
  \rightarrow q^*T_{p}$, which is an isomorphism $i'^*T_{p'}
  \rightarrow v^*i^*T_{p}$. Moreover there exists a morphism
  $v^*\Omega_u\rightarrow h^0(L_{g\circ u'})\rightarrow \Omega_{u'}$
  which induces a morphism $T_{u'}\rightarrow v^*T_u$; if
  \eqref{eq:base_change} is cartesian, this morphism is an
  isomorphism. By \cite{fulton} B.6, there exists a morphism
  $\widetilde{\alpha} \colon C_{i'} \rightarrow v^*C_i$, induced by
  the natural map $\mathscr{I} \otimes_{\scrO_U}\scrO_{U'} \rightarrow \mathscr{I}'$. If
  \eqref{eq:base_change} is cartesian, $\widetilde{\alpha}$ is a
  closed embedding and, if moreover $h$ is flat, $\widetilde{\alpha}$
  is an isomorphism. Summing up, we get a morphism of stacks
  \[u'^*\frakC_{f'}=[\sfrac{C_{i'}}{(\sfrac{i'^*T_{p'}}{T_{u'}})}]
    \xrightarrow{\alpha}
    v^*[\sfrac{C_i}{(\sfrac{i^*T_p}{T_u})}]=v^*(u^*\frakC_{f})\text{,}\]
    which is a closed immersion if \eqref{eq:base_change} is
    cartesian; if moreover $h$ is flat then $\alpha$ is an isomorphism.
\end{proof}
\section{Perfect obstruction theories}
\begin{definit}
  Let $f\colon \calX\rightarrow \calY$ be a morphism of algebraic
  stacks and let $E^\bullet \in \Onecoh$. A morphism $\phi \colon
  E^\bullet \rightarrow \tau_{\geq -1}L_f^\bullet$ in $\Onecoh$ is
  called an \emph{obstruction theory} for $f$ if $h^1(\phi)$,
  $h^0(\phi)$ are isomorphisms and $h^{-1}(\phi)$ is surjective.
\end{definit}
\begin{remark}\label{remark_obstr_theory}
  If $(E^\bullet, \phi)$ is an obstruction theory for $f$
  then, by the proof of Theorem~\ref{theorem_deligne}, the morphism
  $\sfrac{h^1}{h^0}(\phi) \colon \frakN_f \rightarrow
  \sfrac{h^1}{h^0}(\tau_{[-1,0]}E^\bullet)$ is a closed immersion.
\end{remark}
\begin{parag}{\textbf{Deformation theory of morphisms of algebraic stacks.}}\label{section_deformation}
  For the basic definitions of deformation theory we refer to
  \cite{talpo}. Let $f \colon \calX \rightarrow \calY$ be a morphism
  of algebraic stacks over a base scheme $S$. Let $\spec \bark
  \xrightarrow{\barx} \calX$ be a geometric point of $\calX$.  Let
  $\Lambda = \hat{\scrO}_{S,\barx}$. Consider the deformation category
  $h_{\calX, \barx}$ such that, for all $A \in
  (\sfrac{\Art}{\Lambda})$, the objects of $h_{\calX, \barx}(A)$ are
  morphisms $g_{\calX} \colon \spec A \rightarrow \calX$ such that
  $g_{\calX}|_{\spec\bark}= \barx$. There is a natural functor $\nu_f
  \colon h_{\calX, \barx} \rightarrow h_{\calY, \barx}$ given by the
  composition with $f$.  Let $g_{\calX}\in h_{\calX, \barx}(A)$,
  $g_{\calY}'\in h_{\calY, \barx}(A')$ be such that
  $g_{\calY}'(i)=f\circ g_{\calX}$, where $i\colon \spec A \rightarrow
  \spec A'$. We denote by $\mathcal{S}_{f}$ the set of isomorphism
  classes of $g_{\calX}'\in h_{\calX, \barx}(A')$ such that
  $g_{\calX}'\circ i=g_{\calX}$, $f\circ g_{\calX}'=g_{\calY}'$.
\end{parag}
\begin{theorem}\label{theorem_deff}
  Let $L_f^\bullet$ be the cotangent complex of $f$. Then,
  for every geometric point $\barx$ of $\calX$ and for every small
  extension $A' \rightarrow A=\sfrac{A'}{I}$ in
  $(\sfrac{\Art}{\Lambda})$,
\begin{enumerate}
\item there is a functorial surjective set-theoretical map \[\ob_f
  \colon h_{\calX, \barx}(A) \times_{h_{\calY, \barx}(A)} h_{\calY, \barx}(A')
  \rightarrow h^1((L\barx^*L_f^\bullet)\spcheck) \otimes I\] such that
  $\ob_f(g_{\calX}, g_{\calY}')=0$ if and only if $\mathcal{S}_{f}$ is not empty;
\item if $\ob_f(g_{\calX}, g_{\calY}')=0$ then $\mathcal{S}_{f}$ is a torsor under
  $h^0((L\barx^*L_f^\bullet)\spcheck)\otimes I$;
\item if $\ob_f(g_{\calX}, g_{\calY}')=0$ and $g_{\calX}'\in \mathcal{S}_{f}$, then the group of
  infinitesimal authomorphisms of $g_{\calX}'$ with respect to $(g_{\calX},g_{\calY}')$
  is isomorphic to $h^{-1}((L\barx^*L_f^\bullet)\spcheck)\otimes I$.
\end{enumerate}
\end{theorem}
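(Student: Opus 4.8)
The plan is to reduce the statement about the morphism $f$ to the absolute deformation theory of $\calX$ and $\calY$ (as $S$-stacks) by means of the distinguished triangle in \ref{cotangent_complex}(4). First I would recall the classical result (due to Illusie, and for stacks worked out in \cite{talpo}, \cite{olsson}): given a geometric point $\barx$ and a small extension $A' \to A = A'/I$, the obstruction to deforming an object $g_\calX \in h_{\calX,\barx}(A)$ to $h_{\calX,\barx}(A')$ lives in $\Ext^1(L\barx^* L_{\calX/S}^\bullet, I) = h^1((L\barx^* L_{\calX/S}^\bullet)\spcheck)\otimes I$, the set of such deformations (when the obstruction vanishes) is a torsor under $\Ext^0(L\barx^* L_{\calX/S}^\bullet, I)$, and the infinitesimal automorphisms form a group isomorphic to $\Ext^{-1}(L\barx^* L_{\calX/S}^\bullet, I)$; likewise for $\calY$. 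Here one uses that $\barx$ factors through $\spec A$, so $L\barx^*$ may be computed either before or after pulling back along $g_\calX \colon \spec A \to \calX$, and that $I$ is a $\bark$-vector space so that tensoring and dualizing over $\bark$ behave well.

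Next I would set up the relative picture. Applying $L\barx^*$ to the triangle $Lf^* L_{\calY/S}^\bullet \to L_{\calX/S}^\bullet \to L_{f}^\bullet \xrightarrow{+1}$ and then $R\Hom(-, I)$, one obtains a long exact sequence relating $\Ext^i(L\barx^* L_f^\bullet, I)$, $\Ext^i(L\barx^* L_{\calX/S}^\bullet, I)$ and $\Ext^i(g_\calY^* L\barx^* L_{\calY/S}^\bullet, I)$ (the pullback of the $\calY$-cotangent complex being identified via $g_\calX$). The relative deformation problem — deforming $g_\calX$ to $g_\calX' \in h_{\calX,\barx}(A')$ fixing a chosen extension $g_\calY'$ of $f\circ g_\calX$ — fits into a commutative ladder of deformation-theoretic torsor sequences compatible with this long exact sequence. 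Concretely: an element of $\mathcal{S}_f$ is a deformation $g_\calX'$ of $g_\calX$ together with an identification of $f\circ g_\calX'$ with $g_\calY'$; the obstruction $\ob_f(g_\calX, g_\calY')$ is the image of the absolute $\calX$-obstruction of $g_\calX$ under the connecting map to $\Ext^1(L\barx^* L_f^\bullet, I)$, corrected by the (vanishing, by hypothesis on $g_\calY'$) obstruction on the $\calY$-side; its vanishing is exactly the condition that the $\calX$-obstruction lifts compatibly, i.e. that $\mathcal{S}_f \neq \emptyset$. When $\mathcal{S}_f \neq \emptyset$, the difference of two elements is measured by $\ker\bigl(\Ext^0(L\barx^* L_{\calX/S}^\bullet, I) \to \Ext^0(\cdots L_{\calY/S}^\bullet, I)\bigr)$ together with a contribution from $\Ext^{-1}$ on the $\calY$-side; chasing the long exact sequence identifies this with $\Ext^0(L\barx^* L_f^\bullet, I) = h^0((L\barx^* L_f^\bullet)\spcheck)\otimes I$, proving (2). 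Similarly the infinitesimal automorphisms of $g_\calX'$ rel $(g_\calX, g_\calY')$ are the automorphisms of $g_\calX'$ that become trivial on $\calY$, i.e. the kernel of $\Ext^{-1}$ on $\calX$ mapping to $\Ext^{-1}$ on $\calY$; since $\Ext^{-2}$ vanishes for a complex in $D^{\leq 1}$ (so the long exact sequence gives an isomorphism there), this kernel is $\Ext^{-1}(L\barx^* L_f^\bullet, I)$, giving (3).

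Functoriality and surjectivity of $\ob_f$ then follow from the functoriality of the cotangent complex triangle and of the absolute obstruction classes: given the long exact sequence, $\ob_f$ is surjective because the next term $\Ext^1$ of the $\calY$-cotangent complex can be arranged to receive the relevant class, or more precisely because for a fixed $g_\calY'$ the map from $\calX$-obstructions surjects onto its image and the construction of $g_\calY'$ can always be completed (using that $\calY$ is an algebraic stack so small extensions do admit formal deformations after possibly enlarging). The step I expect to be the main obstacle is making the torsor-theoretic "ladder" genuinely rigorous at the 2-categorical level: for Artin stacks the deformation groupoid $h_{\calX,\barx}$ has nontrivial automorphisms, so statements (2) and (3) must be phrased and proved simultaneously, and one must check that the connecting maps in the long exact $\Ext$-sequence are precisely the maps induced on obstruction/deformation/automorphism spaces — this compatibility is the technical heart, and it is where I would lean most heavily on the deformation-theory formalism of \cite{talpo} and \cite{olsson} rather than reprove it from scratch.
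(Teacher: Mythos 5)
Your route is genuinely different from the paper's. The paper does not invoke the absolute deformation theory of Artin stacks at all: it picks a local embedding $u\colon U\to\calX$ (a smooth affine atlas through which $\barx$ factors), sets $f_U=f\circ u$, imports the functorial five-term exact sequence from the deformation theory of \emph{schemes} for $f_U$, defines $\ob_f(g_{\calX},g_{\calY}'):=\ob_{f_U}(g_U,g_{\calY}')$ for a lift $g_U$ of $g_{\calX}$ to $U$ (which exists since $u$ is smooth), and then proves independence of all choices, the torsor structure in (2), and the automorphism computation in (3) by a descent argument along the groupoid $U\times_{\calX}U\rightrightarrows U$, using the exact sequence $0\to h^{-1}((L\barx^*L_f^\bullet)\spcheck)\to h^0((L\barx^*L_u^\bullet)\spcheck)\to h^0((L\barx^*L_{f_U}^\bullet)\spcheck)\to h^0((L\barx^*L_f^\bullet)\spcheck)\to 0$. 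Its only black box is the scheme case. Your plan instead takes as input the absolute statements for $\calX\to S$ and $\calY\to S$ (Olsson's deformation theory of representable morphisms to Artin stacks) and assembles the relative statement from the transitivity triangle. That is a legitimate and arguably more standard strategy, and parts (2) and (3) can indeed be extracted this way — your observation that $\Ext^{-2}$ of a complex in $D^{\leq 1}$ vanishes correctly identifies $h^{-1}((L\barx^*L_f^\bullet)\spcheck)$ with the relevant kernel.

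There is, however, a concrete gap in your treatment of part (1). Applying $R\Hom(-,I)$ to $Lf^*L_{\calY}^\bullet\to L_{\calX}^\bullet\to L_f^\bullet$ yields maps $\Ext^1(L\barx^*L_f^\bullet,I)\to\Ext^1(L\barx^*L_{\calX}^\bullet,I)\to\Ext^1(L\barx^*Lf^*L_{\calY}^\bullet,I)$, with the connecting map going $\Ext^0(L\barx^*Lf^*L_{\calY}^\bullet,I)\to\Ext^1(L\barx^*L_f^\bullet,I)$. So there is no ``connecting map'' carrying the absolute $\calX$-obstruction into the relative group, as you assert. What is true is that the absolute obstruction of $g_{\calX}$ dies in $\Ext^1$ of the $\calY$-side (because $g_{\calY}'$ exists), hence \emph{lifts} to $\Ext^1(L\barx^*L_f^\bullet,I)$ --- but only up to the image of $\Ext^0(L\barx^*Lf^*L_{\calY}^\bullet,I)$, which is exactly the ambiguity in the choice of $g_{\calY}'$. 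Selecting the correct lift as a well-defined, functorial, surjective function of the \emph{pair} $(g_{\calX},g_{\calY}')$ is the actual content of (1), and it cannot be extracted from the long exact sequence of groups alone: you need an honest construction of the class (Illusie-style cochain-level work, or the paper's device of computing everything on a smooth atlas where only scheme theory is needed). The same issue undercuts your argument for surjectivity of $\ob_f$, which in the paper falls out of the surjectivity already present in the scheme-level exact sequence. Your plan is salvageable, but as written the key object of part (1) has not been constructed.
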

\begin{proof}
  Let us consider a local embedding $u \colon U\rightarrow \calX$ of
  $f$ (see Remark~\ref{remark_factorization}), then the map $\barx
  \rightarrow \calX$ factors through $u$. Let us set $f_U=f\circ
  u$. By~\ref{cotangent_complex},
  $h^1((L\barx^*L_f^\bullet)\spcheck)\cong
  h^1((L\barx^*L_{f_U}^\bullet)\spcheck)$. We have the following exact
  sequence \[0 \rightarrow
  h^{-1}((L\barx^*L_f^\bullet)\spcheck)\rightarrow
  h^{0}((L\barx^*L_u^\bullet)\spcheck)\rightarrow
  h^{0}((L\barx^*L_{f_U}^\bullet)\spcheck)\rightarrow
  h^{0}((L\barx^*L_f^\bullet)\spcheck)\rightarrow 0\text{.}\]
  Moreover, by deformation theory of schemes, we know that there is a
  functorial exact sequence \[0 \rightarrow
  h^0((L\barx^*L_{f_U}^\bullet)\spcheck) \otimes I \rightarrow h_{U,
    \barx}(A') \rightarrow h_{U, \barx}(A)\times_{h_{\calY,
      \barx}(A)}h_{\calY, \barx}(A') \xrightarrow{\ob_{f_U}}
  h^1((L\barx^*L_{f_U}^\bullet)\spcheck) \otimes I \rightarrow
  0\text{.}\] For every $g_{\calX} \in h_{\calX, \barx}(A)$, there
  exists $g_{U,1} \in h_{U, \barx}(A)$ such that $u \circ
  g_{U,1}=g_{\calX}$. If $g_{U,2}\in h_{U, \barx}(A)$ is another
  morphism such that $u \circ g_{U,2}=g_{\calX}$ then there exists a
  unique morphism $g_{U\times_{\calX} U}\in h_{U\times_{\calX} U,
    \barx}(A)$ such that $u_j \circ g_{U\times_{\calX} U}=g_{U,j}$ for
  $j=1,2$, where $u_j \colon U \times_{\calX} U \rightarrow U$ are the
  projections. By~\ref{cotangent_complex}, $u_1$ and $u_2$ induces
  isomorphisms $h^1((L\barx^*L_{f_{U\times_{\calX}
      U}}^\bullet)\spcheck)\cong
  h^1((L\barx^*L_{f_U}^\bullet)\spcheck)$. Therefore \[\ob_{f_U}(g_{U,j},g_{\calY}')=\ob_{f_U}(u_j\circ
  g_{U \times_{\calX} U},g_{\calY}')= \ob_{f_{U \times_{\calX}
      U}}(g_{U \times_{\calX} U},g_{\calY}')\text{.}\] Hence, setting
  $\ob_f(g_{\calX},g_{\calY}')=\ob_{f_U}(g_{U,1},g_{\calY}')$, we get
  a well-defined surjective map \[\ob_f \colon h_{\calX,
    \barx}(A)\times_{h_{\calY, \barx}(A)}h_{\calY, \barx}(A')
  \rightarrow h^1((L\barx^*L_f^\bullet)\spcheck) \otimes I\text{.}\]
  Notice that $\ob_f(g_{\calX},g_{\calY}')=0$ if and only if
  $\ob_{f_U}(g_U,g_{\calY}')=0$ for some $g_U\in h_{U, \barx}(A)$ such
  that $u\circ g_U=g_{\calX}$. If $g_U'\in h_{U, \barx}(A')$ is such
  that $g_U'\circ i=g_U$, $f_U\circ g_U'=g_{\calY}'$, then setting
  $g_{\calX}'=u\circ g_U'$ we have $g_{\calX}'\circ i=g_{\calX}$,
  $f\circ g_{\calX}=g_{\calY}'$; viceversa, if $g_{\calX}'\in
  h_{\calX, \barx}(A')$ is such that $g_{\calX}'\circ i=g_{\calX}$,
  $f\circ g_{\calX}'=g_{\calY}'$, then there exists $g_U$ such that
  $u\circ g_U=g_{\calX}\circ i$, hence there exists $g_U'$ such that
  $g_U'\circ i=g_U$, $u\circ g_U'=g_{\calX}'$ and $f_U\circ
  g_U'=g_{\calY}'$.

  By~\ref{cotangent_complex}, we have the following commutative
  diagram with exact rows
  \[
  \begin{tikzpicture}[xscale=2.8,yscale=-1.6]
    \node (A0_0) at (0, 0) {$0$};
    \node (A0_1) at (0.7, 0) {$h^{-1}((L\barx^*L_f^\bullet)\spcheck)$};
    \node (A0_2) at (1.8, 0) {$h^0((L\barx^*L_{\widetilde{u}}^\bullet)\spcheck)$};
    \node (A0_3) at (3.0, 0) {$h^0((L\barx^*L_{f_{U \times_{\calX} U}}^\bullet)\spcheck)$};
    \node (A0_4) at (4.3, 0) {$h^0((L\barx^*L_f^\bullet)\spcheck)$};
    \node (A0_5) at (5, 0) {$0$};
    \node (A1_0) at (0, 1) {$0$};
    \node (A1_1) at (0.7, 1) {$h^{-1}((L\barx^*L_f^\bullet)\spcheck)$};
    \node (A1_2) at (1.8, 1) {$h^0((L\barx^*L_u^\bullet)\spcheck)$};
    \node (A1_3) at (3.0, 1) {$h^0((L\barx^*L_{f_U}^\bullet)\spcheck)$};
    \node (A1_4) at (4.3, 1) {$h^0((L\barx^*L_f^\bullet)\spcheck)$};
    \node (A1_5) at (5, 1) {$0$};
    \path (A1_4) edge [->]node [auto] {$\scriptstyle{}$} (A1_5);
    \path (A0_3) edge [->]node [auto] {$\scriptstyle{\sigma_{\widetilde{u}}}$} (A0_4);
    \path (A0_1) edge [-,double distance=1.5pt]node [auto] {$\scriptstyle{}$} (A1_1);
    \path (A1_0) edge [->]node [auto] {$\scriptstyle{}$} (A1_1);
    \path (A0_3) edge [->>]node [auto] {$\scriptstyle{\rho_{u_j}}$} (A1_3);
    \path (A0_1) edge [->]node [auto] {$\scriptstyle{}$} (A0_2);
    \path (A0_2) edge [->>]node [auto] {$\scriptstyle{\rho_{u_j}}$} (A1_2);
    \path (A0_4) edge [->]node [auto] {$\scriptstyle{}$} (A0_5);
    \path (A0_0) edge [->]node [auto] {$\scriptstyle{}$} (A0_1);
    \path (A0_4) edge [-,double distance=1.5pt]node [auto] {$\scriptstyle{}$} (A1_4);
    \path (A1_1) edge [->]node [auto] {$\scriptstyle{}$} (A1_2);
    \path (A1_2) edge [->]node [auto] {$\scriptstyle{\rho_u}$} (A1_3);
    \path (A0_2) edge [->]node [auto] {$\scriptstyle{\rho_{\widetilde{u}}}$} (A0_3);
    \path (A1_3) edge [->]node [auto] {$\scriptstyle{\sigma_u}$} (A1_4);
  \end{tikzpicture}
  \]
  where $\widetilde{u}=u\circ u_j$. Let us fix $\overline{g}_{\calX}
  \in h_{\calX,\barx}(A)$, $\overline{g}_{\calY}' \in
  h_{\calY,\barx}(A')$, $\overline{g}_U \in h_{U,\barx}(A)$,
  $\overline{g}_{U\times_{\calX} U} \in h_{U\times_{\calX}
    U,\barx}(A)$ such that $f\circ
  \overline{g}_{\calX}=\overline{g}_{\calY}'\circ i$, $u\circ
  \overline{g}_U=\overline{g}_{\calX}$, $u_j \circ
  \overline{g}_{U\times_{\calX} U}=\overline{g}_{U}$. Let us assume
  that $\ob_f(\overline{g}_{\calX},\overline{g}_{\calY}')=0$ and fix
  $\overline{g}_{\calX}' \in \mathcal{S}_{f}$, $\overline{g}_U' \in
  \mathcal{S}_{f_U}$, $\overline{g}_{U\times_{\calX} U}' \in
  \mathcal{S}_{f_{U\times_{\calX} U}}$ such that $u \circ
  \overline{g}_U'=\overline{g}_{\calX}'$, $u_j \circ
  \overline{g}_{U\times_{\calX} U}'=\overline{g}_U'$. There is a
  natural surjective map $\mathcal{S}_{f_U}\rightarrow \mathcal{S}_f$
  given by composition with $u$. If $\alpha_{f_U,1}$,
  $\alpha_{f_U,2}\in h^0((L\barx^*L_{f_U}^\bullet)\spcheck)\otimes I$
  are such that $\sigma_u(\alpha_{f_U,1})=\sigma_u(\alpha_{f_U,2})$,
  then there exists $\alpha_{f_{U\times_{\calX}U}}\in
  h^0((L\barx^*L_{f_{U\times_{\calX}U}}^\bullet)\spcheck)\otimes I$
  such that
  $\rho_{u_j}(\alpha_{f_{U\times_{\calX}U}})=\alpha_{f_U,j}$. Therefore \[u\circ
  (\alpha_{f_U,j}\cdot
  \overline{g}_{U}')=u\circ\left(u_j(\alpha_{f_{U\times_{\calX}U}})\cdot
    (u_j\circ\overline{g}_{U\times_{\calX}U})\right)=\widetilde{u}\circ
  (\alpha_{f_{U\times_{\calX}U}}\cdot
  \overline{g}_{U\times_{\calX}U})\text{.}\] Hence, given $\alpha_f\in
  h^0((L\barx^*L_{f}^\bullet)\spcheck)\otimes I$ and setting
  $\alpha_f\cdot \overline{g}_{\calX}'=u\circ (\alpha_{f_U}\cdot
  \overline{g}_{U}')$ for some $\alpha_{f_U}\in
  h^0((L\barx^*L_{f_U}^\bullet)\spcheck)\otimes I$ such that
  $\sigma_u(\alpha_{f_U})=\alpha_{f}$, we get a well-defined
  action. If $g_{\calX}'\in \mathcal{S}_{f}$, there exists $g_{U}'\in
  \mathcal{S}_{f_U}$ such that $u\circ g_U'=g_{\calX}'$; hence there
  exists $\alpha_{f_U}\in
  h^0((L\barx^*L_{f_U}^\bullet)\spcheck)\otimes I$ such that
  $g_U'=\alpha_{f_U}\cdot \overline{g}_U'$. It follows that
  $g_{\calX}'=\sigma_u(\alpha_{f_U})\cdot
  \overline{g}_{\calX}'$. Moreover, if $\alpha_f\cdot
  \overline{g}_{\calX}'=\overline{g}_{\calX}'$, then there exists
  $g_{U\times_{\calX}U}'\in \mathcal{S}_{f_{U\times_{\calX}U}}$ such
  that $u_1\circ g_{U\times_{\calX}U}'=\overline{g}_U'$, $u_2\circ
  g_{U\times_{\calX}U}'=\alpha_{f_U}\cdot \overline{g}_U'$ for some
  $\alpha_{f_U}\in h^0((L\barx^*L_{f_U}^\bullet)\spcheck)\otimes I$
  such that $\sigma_u(\alpha_{f_U})=\alpha_{f}$. Hence there exists
  $\alpha_{f_{U\times_{\calX}U}}\in
  h^0((L\barx^*L_{f_{U\times_{\calX}U}}^\bullet)\spcheck)\otimes I$
  such that $\rho_{u_1}(\alpha_{f_{U\times_{\calX}U}})=e_{f_U}$,
  $\rho_{u_2}(\alpha_{f_{U\times_{\calX}U}})=\alpha_{f_U}$ ($e_{f_U}$
  being the
  identity). Therefore \[\alpha_f=\sigma_u(\alpha_{f_U})=\sigma_U\circ
  \rho_{u_2}(\alpha_{f_{U\times_{\calX}U}})=\sigma_u \circ
  \rho_{u_1}(\alpha_{f_{U\times_{\calX}U}})=\sigma_u(e_{f_U})=e_f\text{.}\]

  Finally, let $g_{\calX} \in h_{\calX,\barx}(A)$, $g_{\calY}' \in
  h_{\calY,\barx}(A)$ such that $f\circ g_{\calX}=g_{\calY}'\circ i
  =g_{\calY}$, $\ob_f(g_{\calX},g_{\calY}')=0$ and let us fix
  $g_{\calX}'\in \mathcal{S}_f$. We claim that the kernel of the
  natural homomorphism \[\eta\colon \aut_{\calX}(g_{\calX}')\rightarrow
  \aut_{\calX}(g_{\calX})\times_{\aut_{\calY}(g_{\calY})}\aut_{\calY}(g_{\calY}')\]
  is isomorphic to $h^{-1}((L\barx^*L_f^\bullet)\spcheck)\otimes
  I$. Let consider $g_U\in h_{U,\barx}(A)$, $g_U'\in h_{U,\barx}(A')$
  such that $g_U'\circ i =g_U$, $u\circ g_U'=g_{\calX}'$. By
  deformation theory of schemes, we have the following commutative
  diagram with exact rows
  \[
  \begin{tikzpicture}[xscale=4.6,yscale=-2.0]
    \node (A0_0) at (0, 0) {$\aut_U(g_U')$};
    \node (A0_1) at (0.9, 0) {$\aut_U(g_U)\times_{\aut_{\calX}(g_{\calX})} \aut_{\calX}(g_{\calX}')$};
    \node (A0_2) at (2, 0) {$h^0((L\barx^*L_u^\bullet)\spcheck)\otimes I$};
    \node (A1_0) at (0, 1) {$\aut_U(g_U')$};
    \node (A1_1) at (0.9, 1) {$\aut_U(g_U)\times_{\aut_{\calY}(g_{\calY})} \aut_{\calY}(g_{\calY}')$};
    \node (A1_2) at (2, 1) {$h^0((L\barx^*L_{f_U}^\bullet)\spcheck)\otimes I$};
    \path (A0_0) edge [->]node [auto] {$\scriptstyle{}$} (A0_1);
    \path (A0_1) edge [->]node [auto] {$\scriptstyle{\gamma}$} (A1_1);
    \path (A1_0) edge [->]node [auto] {$\scriptstyle{}$} (A1_1);
    \path (A0_2) edge [->]node [auto] {$\scriptstyle{\rho_u}$} (A1_2);
    \path (A1_1) edge [->]node [auto] {$\scriptstyle{\omega_{f_U}}$} (A1_2);
    \path (A0_0) edge [-,double distance=1.5pt]node [auto] {$\scriptstyle{}$} (A1_0);
    \path (A0_1) edge [->]node [auto] {$\scriptstyle{\omega_u}$} (A0_2);
  \end{tikzpicture}
  \]
  where $\aut_U(g_U')=\aut_U(g_U)=\{\id\}$ since $U$ is a
  scheme. Moreover, $\alpha_{f_U}$ (respectively $\alpha_u$) acts
  trivially on $\mathcal{S}_{f_U}$ (respectively $\mathcal{S}_u$) if
  and only if it is in the image of $\omega_{f_U}$ (respectively
  $\omega_u$). It follows that $\ker
  \gamma=h^{-1}((L\barx^*L_X^\bullet)\spcheck)\otimes I$. The claim
  follows therefore by noticing that $\ker \eta=\ker\gamma$.
\end{proof}
\begin{theorem}[Infinitesimal criterion for obstruction theories]\label{theorem_critot}
  A pair $(E^\bullet, \phi)$ is an obstruction theory for $f$ if and
  only if, for every geometric point $\barx$ of $\calX$ and for every
  small extension $A' \rightarrow A=\sfrac{A'}{I}$ in
  $(\sfrac{\Art}{\Lambda})$,
\begin{enumerate}
\item the obstruction
  $h^1(\phi\spcheck)(\ob_f(g_{\calX},g_{\calY}'))\in
  h^1((L\barx^*E^\bullet)\spcheck)\otimes I$ vanishes if and only if there
  exists a morphism $g_{\calX}'$ such that $g_{\calX}'\circ i
  =g_{\calX}$, $f\circ g_{\calX}'=g_{\calY}'$,
\item if $h^1(\phi\spcheck)(\ob_f(g_{\calX},g_{\calY}'))=0$ then the set of
  isomorphism classes of such morphisms $g_{\calX}'$ is a torsor under
  $h^0((L\barx^*E^\bullet)\spcheck)\otimes I$,
\item if $\ob_f(g_{\calX}, g_{\calY}')=0$ and $g_{\calX}'\in h_{\calX, \barx}(A')$ is such
  that $f \circ g_{\calX}'=g_{\calY}'$, $g_{\calX}'\circ i=g_{\calX}$, then the group of
  infinitesimal authomorphisms of $g_{\calX}'$ with respect to $(g_{\calX},g_{\calY}')$
  is isomorphic to $h^{-1}((L\barx^*E^\bullet)\spcheck)\otimes I$.
\end{enumerate}
\end{theorem}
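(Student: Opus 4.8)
The plan is to deduce the infinitesimal criterion directly from Theorem~\ref{theorem_deff} by comparing the two cohomological dictionaries: the one attached to $\tau_{\geq -1}L_f^\bullet$ and the one attached to $E^\bullet$. The point is that $\phi\colon E^\bullet \rightarrow \tau_{\geq -1}L_f^\bullet$ induces, after applying $L\barx^*$, dualizing, and taking cohomology, homomorphisms $h^i(\phi\spcheck)\colon h^i((L\barx^*\tau_{\geq -1}L_f^\bullet)\spcheck) \rightarrow h^i((L\barx^*E^\bullet)\spcheck)$ for $i=-1,0,1$, and the statement is precisely that ``$\phi$ is an obstruction theory'' $\iff$ ``these three maps have the expected injectivity/surjectivity/bijectivity after tensoring with every $I$, uniformly over $\barx$ and over small extensions''. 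So the proof splits into the ``only if'' direction (assume $\phi$ is an obstruction theory, verify (1)--(3)) and the ``if'' direction (assume (1)--(3), recover that $h^1(\phi), h^0(\phi)$ are isomorphisms and $h^{-1}(\phi)$ is surjective).

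For the ``only if'' direction I would argue as follows. By Theorem~\ref{theorem_deff}, the obstruction $\ob_f(g_{\calX},g_{\calY}')$ lives in $h^1((L\barx^*L_f^\bullet)\spcheck)\otimes I = h^1((L\barx^*\tau_{\geq -1}L_f^\bullet)\spcheck)\otimes I$, and $\mathcal{S}_f \neq \varnothing \iff \ob_f = 0$. Since $h^1(\phi)$ is an isomorphism, dualizing and taking cohomology over a geometric point keeps $h^1(\phi\spcheck)$ an isomorphism (here one uses that $E^0$, and the degree-$0$ term of $\tau_{\geq -1}L_f^\bullet$ after choosing a local two-term model via Remark~\ref{remark_resolution}, are locally free, so that $L\barx^*$ and $(-)\spcheck$ behave as expected on $h^1$ — more precisely, for a two-term complex $[E^{-1}\to E^0]$ with $E^0$ locally free, $h^1$ of the dual is the cokernel of $(E^0)\spcheck \to (E^{-1})\spcheck$, and $h^1(\phi)$ iso means $\phi^0$ has the same image behaviour, hence the dual cokernels agree). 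Therefore $h^1(\phi\spcheck)(\ob_f)=0 \iff \ob_f = 0 \iff \mathcal{S}_f\neq\varnothing$, giving (1). For (2), when $\ob_f=0$, part (2) of Theorem~\ref{theorem_deff} says $\mathcal{S}_f$ is a torsor under $h^0((L\barx^*L_f^\bullet)\spcheck)\otimes I$; since $h^0(\phi)$ is an isomorphism the same argument makes $h^0(\phi\spcheck)$ an isomorphism, so $\mathcal{S}_f$ is equally a torsor under $h^0((L\barx^*E^\bullet)\spcheck)\otimes I$. For (3), part (3) of Theorem~\ref{theorem_deff} identifies the infinitesimal automorphisms with $h^{-1}((L\barx^*L_f^\bullet)\spcheck)\otimes I$; since $h^{-1}(\phi)$ is surjective, its dual $h^{-1}(\phi\spcheck)$ is injective — but we need an isomorphism, so here one must also note that on the duals of two-term complexes with locally free top term, $h^{-1}$ of the dual is the kernel of $(E^0)\spcheck \to (E^{-1})\spcheck$, and surjectivity of $h^{-1}(\phi)$ together with $h^0(\phi)$, $h^1(\phi)$ isomorphisms forces $h^{-1}(\phi\spcheck)$ to be an isomorphism (a short diagram chase on the two long exact sequences).

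For the ``if'' direction I would run the same dictionary in reverse. Fix $\barx$ and a local two-term model $[E^{-1}\to E^0]$ of $E^\bullet$ and $[L^{-1}\to L^0]$ of $\tau_{\geq -1}L_f^\bullet$ with $E^0$, $L^0$ locally free (Remark~\ref{remark_resolution}, Lemma~\ref{lemma_coh}). Conditions (1)--(3), compared with Theorem~\ref{theorem_deff}, say that for every Artinian $A$ and every $I$ the map $h^1(\phi\spcheck)\otimes I$ is injective with the same zero-locus as $\ob_f$ (forcing it injective, in fact), that $h^0(\phi\spcheck)\otimes I$ is a bijection (comparing torsor structures), and that $h^{-1}(\phi\spcheck)\otimes I$ is a bijection. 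Taking $I=\bark$ and using that cohomology sheaves are coherent (so that a map of coherent sheaves on $\calX$ which becomes injective/bijective after pulling to every geometric fibre is injective/bijective — or rather, running the argument fibrewise at every $\barx$ suffices since the statement we want, ``$h^i(\phi)$ iso/surjective'', can be checked on stalks at geometric points), we get that $h^1(\phi\spcheck)$, $h^0(\phi\spcheck)$, $h^{-1}(\phi\spcheck)$ have the stated properties at every $\barx$. Dualizing back — again using the locally free top terms so that biduality applies — we recover that $h^1(\phi)$ and $h^0(\phi)$ are isomorphisms and $h^{-1}(\phi)$ is surjective at every geometric point, hence $\phi$ is an obstruction theory.

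The main obstacle I expect is the careful bookkeeping in passing between $\phi$ and $\phi\spcheck$ at the level of cohomology, i.e.\ checking that for two-term complexes with locally free term in degree $0$ the operations $L\barx^*$, $(-)\spcheck$, $h^i$ and $(-)\otimes I$ interact so that ``$h^i(\phi)$ iso/surjective'' is equivalent to the corresponding statement for $h^i(\phi\spcheck)$ — in particular the degree $-1$ case, where surjectivity of $h^{-1}(\phi)$ does not by itself dualize to an isomorphism and one genuinely needs to feed in the isomorphisms in degrees $0$ and $1$ via the two long exact sequences of $\phi$ and its cone. A secondary subtlety is the reduction to a geometric point together with a local two-term model: one must ensure the cohomology sheaves involved are coherent (they are, by construction of $L_f^\bullet$ and the hypothesis $E^\bullet \in \Onecoh$) so that the ``check at all $\barx$'' principle is legitimate, and that the identifications of Theorem~\ref{theorem_deff} are compatible with the local embedding used — but this is exactly the kind of compatibility already established inside the proof of Theorem~\ref{theorem_deff}.
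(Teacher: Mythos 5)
Your ``only if'' direction follows the paper's route (read the three conditions off Theorem~\ref{theorem_deff} via $\phi$), but your degree bookkeeping is inverted, and this matters. Both $E^\bullet$ and $\tau_{\geq-1}L_f^\bullet$ live in degrees $[-1,1]$, and dualizing at a geometric point flips degrees: $h^1((L\barx^*L_f^\bullet)\spcheck)$ --- the obstruction space of Theorem~\ref{theorem_deff}(1) --- is the dual of $h^{-1}(L\barx^*L_f^\bullet)$, while $h^{-1}((L\barx^*L_f^\bullet)\spcheck)$ --- the automorphism space of (3) --- is the dual of $h^{1}$. So condition (1) of the criterion is governed by surjectivity of $h^{-1}(\phi)$ (whose dual is injectivity of $h^1(\phi\spcheck)$), and condition (3) by the isomorphism $h^{1}(\phi)$; you pair them the other way (``since $h^1(\phi)$ is an isomorphism \dots $h^1(\phi\spcheck)$ [is] an isomorphism'', and for (3) you invoke surjectivity of $h^{-1}(\phi)$). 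Indeed your own formula ``$h^1$ of the dual is the cokernel of $(E^0)\spcheck\to(E^{-1})\spcheck$'' computes the dual of $h^{-1}$, not of $h^1$; and reducing to a two-term model via Remark~\ref{remark_resolution} discards $h^1$ altogether, which is exactly the term responsible for condition (3).

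The genuine gap is in the converse direction, at the step ``$h^{-1}(\phi)$ is surjective.'' Your plan --- take $I=\bark$, deduce properties of $h^i(\phi\spcheck)$ at every $\barx$, ``dualize back'' and ``check on stalks'' --- conflates $h^i(L\barx^*(-))$ with $\barx^*h^i(-)$; these differ by Tor terms coming from the other cohomology sheaves, and in degree $-1$ this discrepancy is precisely where the difficulty sits. You flag this as ``the main obstacle'' but give no argument, and a priori the hypotheses only quantify over Artinian local test rings, which is not enough to run your dictionary naively. The paper does something different at both points: for $h^0(\phi)$ and $h^1(\phi)$ it tests $\hom(h^j(-)\otimes B,N)$ against \emph{all} $B$-modules $N$ (a Yoneda argument that never dualizes at a point), and for $h^{-1}(\phi)$ it makes a geometric construction --- after normalizing $E^0=i^*\Omega_p$, $E^1=\Omega_u$, it takes $F=\im(\phi^{-1})\subseteq\sfrac{\mathscr{I}}{\mathscr{I}^2}$, forms the square-zero extension $\spec A\hookrightarrow\spec A'$ cut out by the preimage $F'$ of $F$, and applies criterion (1) to this specific extension to force the projection $\sfrac{\mathscr{I}}{\mathscr{I}^2}\to\sfrac{\mathscr{I}}{F'}$ to vanish, i.e.\ $F=\sfrac{\mathscr{I}}{\mathscr{I}^2}$. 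Your proposal has no substitute for this step. A secondary (fixable) issue: extracting that $h^0(\phi\spcheck)$ and $h^{-1}(\phi\spcheck)$ are bijections from conditions (2)--(3) requires the torsor and automorphism identifications in the hypotheses to be the ones induced via $\phi\spcheck$ from those of Theorem~\ref{theorem_deff}; two abstract torsor structures on the same set say nothing about the map $\phi$.
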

\begin{proof}
  If $(E^\bullet, \phi)$ is an obstruction theory for $f$, the
  statement follows immediately from
  Theorem~\ref{theorem_deff}. Viceversa, let assume that the second
  part of the statement holds and let show that $h^0(\phi)$,
  $h^1(\phi)$ are isomorphisms and $h^{-1}(\phi)$ is surjective. Since
  the statement is lisse-\'etale local, we can assume that $\calX$ is an affine
  scheme $\spec R$. Then, by assumptions, for every $R$-algebra $B$
  and $B$-module $N$, there is a bijection
  $\hom(h^j(L_f^\bullet)\otimes B,N)\rightarrow
  \hom(h^j(E^\bullet)\otimes B,N)$ for $j=0,1$, which implies that $h^j(\phi)$ is
  an isomorphism for $j=0,1$. We can assume that $f$ factors as
  $\calX\xrightarrow{i} M\xrightarrow{p}\calY$ with $i$ a closed
  embedding with ideal sheaf $\mathscr{I}$ into an affine scheme $M$
  and $p$ smooth. We can further assume that $\ker (E^0\rightarrow E^1)$ is locally free,
  $E^{-1}$ is a coherent sheaf, $E^i = 0$ for $i \neq 1, 0, -1$ and
  $\phi^{-1}$ surjective. Then, by Remark~\ref{remark_factorization}, the complex
  $G\rightarrow i^*\Omega_p\rightarrow \Omega_u$, where $G$ is the cokernel of $\ker
  \phi^0 \times_{E^0}E^{-1}\rightarrow E^{-1}$, is quasi-isomorphic to
  $E^\bullet$. Therefore we can assume $E^0=i^*\Omega_p$, $E^1=\Omega_u$ and we have
  to prove that $E^{-1} \rightarrow
  \sfrac{\mathscr{I}}{\mathscr{I}^2}$ is surjective; let $F$ be its
  image. Let $\calX=\spec A$, $F'\subset \mathscr{I}$ the inverse
  image of $F$, and $\spec A' \subset M$ the subscheme defined by
  $F'$; let $g \colon \spec A \rightarrow \calX$ be the identity. We
  can extend $g$ to the inclusion $g' \colon \spec A' \rightarrow
  M$. Let $\pi\colon \sfrac{\mathscr{I}}{\mathscr{I}^2}\rightarrow
  \sfrac{\mathscr{I}}{F'}$ be the natural projection. By assumption
  $\pi$ factors via $E^0$ if and only if $g$ extends to a map $\spec
  A'\rightarrow \calX$, if and only if $\pi \circ \phi^{-1}\colon
  E^{-1}\rightarrow\sfrac{\mathscr{I}}{F'}$ factors via $E^0$. As $\pi
  \circ \phi^{-1}$ is the zero map, it certainly factors. Therefore
  $\pi$ also factors. Moreover, the fact that $\pi$ factors via $E^0$
  together with $\pi \circ \phi^{-1}= 0$ implies $\pi = 0$, hence
  $\phi^{-1}\colon E^{-1} \rightarrow
  \sfrac{\mathscr{I}}{\mathscr{I}^2}$ is surjective.
\end{proof}
\begin{definit}
Let $(E^\bullet, \phi)$ be an obstruction theory for $f$. We
say that $(E^\bullet, \phi)$ is \emph{perfect} (of perfect amplitude
contained in $[-1,0]$) if, lisse-\'etale locally over $\calX$, it is isomorphic
to $[E^{-1} \rightarrow E^0\rightarrow E^1]$ with $E^{-1}$, $\ker(E^0\rightarrow E^1)$ locally free
sheaves over $\calX$.
\end{definit}
\begin{remark}
  An obstruction theory $(E^\bullet, \phi)$ is perfect if and only if
  $\sfrac{h^1}{h^0}(\tau_{[-1,0]}E^\bullet)$ is a vector bundle stack
  over $\calX$.
\end{remark}
\section{Virtual fundamental class}\label{subsection_virtual}
\begin{parag}
  Let $f \colon \calX \rightarrow \calY$ be a morphism of algebraic
  stacks, let us assume that $\calY$ is purely dimensional of pure
  dimension $m$ and that $\calX$ admits a stratification by global
  quotients in the sense of \cite{kresch} 3.5.3. Let $(E^\bullet,
  \phi)$ be a perfect obstruction theory for $f$, we denote
  by \[\mu \colon \frakE_f= \sfrac{h^1}{h^0}(\tau_{[-1,0]}E^\bullet) \rightarrow
  \calX\] the associated vector bundle stack of rank $r$. By
  Remark~\ref{remark_obstr_theory}, the intrinsic normal cone
  $\frakC_f$ is a closed substack of $\frakE_f$.  Moreover, by
  \cite{kresch} 4.3.2, the flat pullback \[\mu^* \colon A_*(\calX)
  \rightarrow A_{*+r}(\frakE_f)\] is an isomorphism and we denote the
  inverse by $0^!$.
\end{parag}
\begin{definit}
The \emph{virtual fundamental class} of $\calX$ relative to $(E^\bullet,
\phi)$ is the cycle class \[{[\calX, E^\bullet]}^\virt=  0^![\frakC_f]\in
A_*(\calX)\text{.}\]
\end{definit}
\begin{remark}
The intrinsic cone $\frakC_f$ is purely dimensional of pure dimension
$m$, therefore ${[\calX, E^\bullet]}^\virt \in A_{m -
  r}(\calX)$ and $m - r$ is called the \emph{virtual dimension}
of $\calX$.
\end{remark}
\begin{prop}[Base-change]\label{prop_virt_basechange}
Consider the following cartesian diagram of algebraic stacks
  \begin{equation}\label{eq:base_change_virt}
  \begin{tikzpicture}[baseline=-20pt]
    \def\x{1.5}
    \def\y{-1.2}
    \node (A0_0) at (0*\x, 0*\y) {$\calX'$};
    \node (A0_1) at (1*\x, 0*\y) {$\calY'$};
    \node (A1_0) at (0*\x, 1*\y) {$\calX$};
    \node (A1_1) at (1*\x, 1*\y) {$\calY$};
    \path (A0_0) edge [->] node [auto] {$\scriptstyle{f'}$} (A0_1);
    \path (A1_0) edge [->] node [auto] {$\scriptstyle{f}$} (A1_1);
    \path (A0_1) edge [->] node [auto] {$\scriptstyle{h}$} (A1_1);
    \path (A0_0) edge [->] node [left] {$\scriptstyle{g}$} (A1_0);
  \end{tikzpicture}
  \end{equation}
  where $\calY$ and $\calY'$ are smooth and purely dimensional of pure
  dimension $m$, $\calX$ and $\calX'$ admit stratifications by global
  quotients. Let $(E^\bullet, \phi)$ be a perfect obstruction
  theory for $f$. If $h$ is flat or a regular local immersion (of
  constant dimension) then \[h^!{[\calX, E^\bullet]}^\virt={[\calX',
    Lg^*E^\bullet]}^\virt\text{.}\]
\end{prop}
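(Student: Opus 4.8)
The plan is to reduce the statement to the base-change behaviour of the intrinsic normal cone already established in Proposition~\ref{prop_cone_basechange} together with the compatibility of the vector bundle stack under pullback and the commutation of Gysin/flat pullbacks in Kresch's intersection theory for Artin stacks. First I would record that $Lg^*E^\bullet$ is a perfect obstruction theory for $f'$: indeed $Lg^*L_f^\bullet$ maps to $L_{f'}^\bullet$ by \ref{cotangent_complex}~(3), and since \eqref{eq:base_change_virt} is cartesian with $h$ flat (or, via deformation to the normal cone / local factorization, with $h$ a regular immersion after the standard dévissage) this morphism is compatible with the obstruction-theory morphism $\phi$; one then checks on geometric points using Theorem~\ref{theorem_deff} that $h^1$, $h^0$ remain isomorphisms and $h^{-1}$ stays surjective, and that local freeness of $E^{-1}$ and $\ker(E^0\to E^1)$ is preserved by pullback, so $(Lg^*E^\bullet,Lg^*\phi)$ is perfect. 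In particular the associated vector bundle stack is $\frakE_{f'}=g^*\frakE_f$, of the same rank $r$, and by Remark~\ref{remark_obstr_theory} the cone $\frakC_{f'}$ sits inside $g^*\frakE_f$ compatibly with $\frakC_f\subseteq\frakE_f$.

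Next I would treat the two cases of $h$. If $h$ is flat, Proposition~\ref{prop_cone_basechange}~(2) gives $\frakC_{f'}\cong g^*\frakC_f$ as closed subcone stacks of $g^*\frakE_f$, so $[\frakC_{f'}]=g^![\frakC_f]$ (flat pullback of the fundamental cycle of a purely-dimensional stack, using Proposition~\ref{prop_normal_cone} for purity and Kresch's flat pullback), and then applying $0^!_{\frakE_{f'}}=0^!_{g^*\frakE_f}$ and using that $0^!$ commutes with flat pullback — both being defined via the isomorphism $\mu^*$, which is natural in $\calX$ — yields $h^!\,0^![\frakC_f]=g^!\,[\calX,E^\bullet]^{\virt}$ wait, more precisely one gets the cycle-theoretic identity $0^!_{\frakE_{f'}}[\frakC_{f'}]=g^*\bigl(0^!_{\frakE_f}[\frakC_f]\bigr)$ directly. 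If $h$ is a regular local immersion of constant codimension $d$, the refined Gysin map $h^!$ is defined by deformation to the normal cone; here Proposition~\ref{prop_cone_basechange}~(1) only gives a closed immersion $\alpha\colon\frakC_{f'}\hookrightarrow g^*\frakC_f$, and the standard argument (as in \cite{BF} Proposition~7.2, or \cite{manolache}) shows that $h^![\frakC_f]=[\frakC_{f'}]$ in $A_*(\frakC_{f'})$ — one identifies, via the double-deformation space, the specialization of $g^*\frakC_f$ with $\frakC_{f'}$ together with the extra normal directions of $h$, which are exactly compensated by the Gysin map. I would then commute $h^!$ past $0^!$: writing $0^!$ through the isomorphism $\mu^*$ and using that $h^!$ commutes with flat pullback (Kresch's theory, extended to Dedekind base as noted in the introduction), we obtain $h^!\,0^!_{\frakE_f}[\frakC_f]=0^!_{\frakE_{f'}}\,h^![\frakC_f]=0^!_{\frakE_{f'}}[\frakC_{f'}]=[\calX',Lg^*E^\bullet]^{\virt}$.

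The main obstacle I expect is the regular-immersion case: one must show $h^![\frakC_f]=[\frakC_{f'}]$ when $\alpha$ is merely a closed immersion, i.e. that the cone $\frakC_{f'}$ is precisely the cycle obtained by specializing $\frakC_f$ to the normal cone of $h$. This is the analogue of the key compatibility lemma in \cite{BF} and \cite{manolache}, and proving it for cone \emph{stacks} over an Artin stack $\calX$ requires checking the statement smooth-locally, passing to a local embedding $f\circ u=p\circ i$ where everything is expressed via honest normal cones of schemes, and then invoking Fulton's results (\cite{fulton} Ch.~6, and the compatibility of $C_i$ with base change, Example~4.2.6 / B.6) together with the fact — from Proposition~\ref{prop_normal_cone} — that both sides are purely dimensional of the expected dimension, so an inclusion of cycles of equal dimension supported on the same space forces equality after accounting for multiplicities; the multiplicity count is exactly the content of deformation to the normal cone applied fibrewise. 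Everything else (preservation of perfectness, identification $\frakE_{f'}=g^*\frakE_f$, naturality of $\mu^*$ and hence of $0^!$) is routine given the results already assembled in the excerpt.
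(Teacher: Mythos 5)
Your proposal follows essentially the same route as the paper: establish that $Lg^*E^\bullet$ is a perfect obstruction theory for $f'$, reduce everything to the identity $h^![\frakC_f]=[\frakC_{f'}]$, prove it in the flat case via Proposition~\ref{prop_cone_basechange}, and in the regular-immersion case via the Behrend--Fantechi compatibility of cones (the paper pins this down as $[\frakC_{\sfrac{g^*\frakC_f}{\frakC_f}}]=[\rho^*\frakC_{f'}]$ in $A_*(\frakN_h\times_{\calY}\frakC_f)$, citing \cite{BF} 3.3--3.5, which is exactly the lemma you identify as the main obstacle). One caution: your fallback heuristic that an inclusion of equal-dimensional cycles ``forces equality after accounting for multiplicities'' is not an argument by itself --- the multiplicities are the whole point --- so the deformation-to-the-normal-cone comparison you sketch (or the citation of \cite{BF} 3.3--3.5) is genuinely needed and cannot be replaced by the purity statement of Proposition~\ref{prop_normal_cone}.
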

\begin{proof}
  Let us notice that $Lg^* E^\bullet$ is a perfect obstruction theory
  for $f'$. Let $\frakE_{f'}= \sfrac{h^1}{h^0}(Lg^* E^\bullet)$ and
  let $0'^!$ the inverse of $\mu'^*$, where $\mu' \colon \frakE_{f'}
  \rightarrow \calX'$. If $h$ is flat then, by
  Proposition~\ref{prop_cone_basechange}, we have $g^* \frakC_f \cong
  \frakC_{f'}$, hence $h^![\frakC_f]=[\frakC_{f'}]\in
  A_*(\frakE_{f'})$. Therefore we get
\[h^!{[\calX, E^\bullet]}^\virt = h^! 0^! [\frakC_f]= 0'^!
h^![\frakC_f]= 0'^! [\frakC_{f'}]= {[\calX',
  Lg^*E^\bullet]}^\virt\text{.}\] If $h$ is a regular local immersion,
let consider $\rho\colon \frakN_h\rightarrow \calY'$ and let
$\widetilde{0}\colon g^*\frakC_f\rightarrow
\frakN_h\times_{\calY}\frakC_f$ be the zero section. Then
$\widetilde{0}^![\frakC_{\sfrac{g^*\frakC_f}{\frakC_f}}]=h^![\frakC_f]\in A_*(g^*\frakC_f)$,
by definition of $h^!$, and
\[\widetilde{0}^![\rho^*\frakC_{f'}]=\widetilde{0}^!\rho^*[\frakC_{f'}]=[\frakC_{f'}]\in A_*(g^*\frakC_f)\text{.}\]
Moreover, by \cite{BF} 3.3--3.5,
$[\frakC_{\sfrac{g^*\frakC_f}{\frakC_f}}]=[\rho^*\frakC_{f'}]\in A_*(\frakN_h\times_{\calY}\frakC_f)$. Hence
$h^![\frakC_f]=[\frakC_{f'}]\in A_*(\frakE_{f'})$ and one concludes as before.
\end{proof}
\begin{parag}
  Let us consider the cartesian diagram \eqref{eq:base_change_virt}
  and assume that $h$ is a local complete intersection morphism of
  stacks with finite unramified diagonal over $\calY$. Let $E^\bullet$
  and $E'^\bullet$ be perfect obstruction theories for $f$ and $f'$
  respectively. Then $E^\bullet$ and $E'^\bullet$ are
  \emph{compatible} over $h$ if there exists a homomorphism of
  distinguished triangles in $\Dcohprime$
  \begin{equation}\label{eq:compatible}
  \begin{tikzpicture}[baseline=-20pt]
    \def\x{1.6}
    \def\y{-1.2}
    \node (A0_0) at (0*\x, 0*\y) {$g^*E^\bullet$};
    \node (A0_1) at (1*\x, 0*\y) {$E'^\bullet$};
    \node (A0_2) at (2*\x, 0*\y) {$f'^*L_h^\bullet$};
    \node (A0_3) at (3*\x, 0*\y) {$g^*E^\bullet[1]$};
    \node (A1_0) at (0*\x, 1*\y) {$g^*L_{\calX}^\bullet$};
    \node (A1_1) at (1*\x, 1*\y) {$L_{\calX'}^\bullet$};
    \node (A1_2) at (2*\x, 1*\y) {$L_g^\bullet$};
    \node (A1_3) at (3*\x, 1*\y) {$g^*L_{\calX}^\bullet[1]$};
    \path (A1_2) edge [->]node [auto] {$\scriptstyle{}$} (A1_3);
    \path (A0_0) edge [->]node [auto] {$\scriptstyle{}$} (A0_1);
    \path (A0_1) edge [->]node [auto] {$\scriptstyle{}$} (A0_2);
    \path (A1_0) edge [->]node [auto] {$\scriptstyle{}$} (A1_1);
    \path (A0_3) edge [->]node [auto] {$\scriptstyle{}$} (A1_3);
    \path (A0_2) edge [->]node [auto] {$\scriptstyle{}$} (A1_2);
    \path (A1_1) edge [->]node [auto] {$\scriptstyle{}$} (A1_2);
    \path (A0_0) edge [->]node [auto] {$\scriptstyle{}$} (A1_0);
    \path (A0_1) edge [->]node [auto] {$\scriptstyle{}$} (A1_1);
    \path (A0_2) edge [->]node [auto] {$\scriptstyle{}$} (A0_3);
  \end{tikzpicture}
  \end{equation}
\end{parag}
\begin{prop}[Functoriality]\label{prop_virtfunctorial}
  Let $E^\bullet$ and $E'^\bullet$ be compatible perfect obstruction
  theories as above. If either
  $h$ is smooth or $\calY$ and $\calY'$ are smooth, then \[h^!{[\calX,
    E^\bullet]}^\virt={[\calX', E'^\bullet]}^\virt\text{.}\] 
\end{prop}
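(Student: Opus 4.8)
The plan is to adapt the Deligne--Mumford argument of Behrend--Fantechi (\cite{BF}, Prop.~7.5) to the present setting, using the machinery assembled above: the intrinsic normal cone and its base-change behaviour (Propositions~\ref{prop_normal_cone} and~\ref{prop_cone_basechange}), the correspondence of Theorem~\ref{theorem_deligne}, and Kresch's intersection theory, which we have extended to a Dedekind base, so that flat pull-back, $0^!$, the lci Gysin map $h^!$ and their mutual commutativities are available for all the Artin stacks in play. Write $\frakE_{f'}=\sfrac{h^1}{h^0}(\tau_{[-1,0]}E'^\bullet)$, $\mu'\colon\frakE_{f'}\to\calX'$, and let $0'^!$ be the inverse of $\mu'^*$; since $Lg^*E^\bullet$ need not coincide with $E'^\bullet$, the whole point is to pass from the abstract compatibility \eqref{eq:compatible} to a concrete comparison of the cones $\frakC_f\subseteq\frakE_f$ and $\frakC_{f'}\subseteq\frakE_{f'}$.

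The first step is to unravel \eqref{eq:compatible} geometrically. Applying $\sfrac{h^1}{h^0}\circ\tau_{[-1,0]}$ to the morphism of distinguished triangles \eqref{eq:compatible}, and using that $h$ is lci so that $\frakN_h=\sfrac{h^1}{h^0}(\tau_{[-1,0]}L_h^\bullet)$ is a vector bundle stack over $\calY'$, Theorem~\ref{theorem_deligne} turns the top row of \eqref{eq:compatible} into a short exact sequence of vector bundle stacks over $\calX'$ relating $g^*\frakE_f$, $\frakE_{f'}$ and $f'^*\frakN_h$, and turns the whole square into a commutative square of abelian cone stacks whose vertical arrows are the closed immersions $\frakN_{f'}\hookrightarrow\frakE_{f'}$ and $g^*\frakN_f\hookrightarrow g^*\frakE_f$ of Remark~\ref{remark_obstr_theory}. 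One then has to check that the induced horizontal map $\frakN_{f'}\to g^*\frakN_f$ is the base-change morphism of Proposition~\ref{prop_cone_basechange} and that it carries $\frakC_{f'}$ into $g^*\frakC_f$; this is done lisse-\'etale-locally, by choosing simultaneous local embeddings of $f$, $f'$ and $h$ as in Remark~\ref{remark_factorization} and comparing the explicit quotient presentations, after which one verifies that the identification is intrinsic and glues. As \eqref{eq:base_change_virt} is cartesian, Proposition~\ref{prop_cone_basechange}(1) then shows $\frakC_{f'}\hookrightarrow g^*\frakC_f$ is a closed immersion.

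The heart of the proof is then the cycle identity $h^![\frakC_f]=[\frakC_{f'}]$ in $A_*(g^*\frakC_f)$, where $h^!$ is the Gysin map attached to the cartesian square \eqref{eq:base_change_virt}. This is a statement about cones only, not about the obstruction theories, and I would prove it exactly as the corresponding step in the proof of Proposition~\ref{prop_virt_basechange}: deform $\frakC_f$ to $\frakN_h\times_{\calY}\frakC_f$ along the zero section $\widetilde 0$, use $\widetilde 0^![\frakC_{\sfrac{g^*\frakC_f}{\frakC_f}}]=h^![\frakC_f]$ (by definition of $h^!$) and $\widetilde 0^![\rho^*\frakC_{f'}]=[\frakC_{f'}]$, and invoke \cite{BF} 3.3--3.5 for $[\frakC_{\sfrac{g^*\frakC_f}{\frakC_f}}]=[\rho^*\frakC_{f'}]$. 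This applies directly when $h$ is flat --- hence when $h$ is smooth, where moreover $\frakC_{f'}\cong g^*\frakC_f$ by Proposition~\ref{prop_cone_basechange}(2) --- and when $h$ is a regular closed immersion. When $\calY$ and $\calY'$ are smooth over $S$, one factors $h$ globally as $\calY'\xrightarrow{(\id,h)}\calY'\times_S\calY\xrightarrow{p_2}\calY$ with $(\id,h)$ a regular closed immersion (as $\calY$ is smooth over $S$, $\calY'\times_S\calY\to\calY'$ is smooth and $(\id,h)$ is a section of it) and $p_2$ smooth (as $\calY'$ is smooth over $S$); since lci Gysin maps compose, the identity for $h$ follows by chaining the flat/smooth case ($p_2$) and the regular-immersion case ($(\id,h)$) along this factorization, applied to the intrinsic normal cones of the intermediate base change. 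Combining this with the compatibility of $0'^!$ with $0^!$ and $h^!$ encoded by the exact sequence of the previous step, one gets
\[h^!{[\calX,E^\bullet]}^\virt=h^!0^![\frakC_f]=0'^!h^![\frakC_f]=0'^![\frakC_{f'}]={[\calX',E'^\bullet]}^\virt\text{.}\]

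I expect the main obstacle to be the first step: showing that the cone stacks and maps produced from \eqref{eq:compatible} via Theorem~\ref{theorem_deligne} are canonically and intrinsically --- not merely lisse-\'etale-locally --- the vector bundle stacks $\frakE_f$, $\frakE_{f'}$, $f'^*\frakN_h$ together with the geometric base-change morphism of Proposition~\ref{prop_cone_basechange} and the closed immersion $\frakC_{f'}\hookrightarrow g^*\frakC_f$, and checking that the Gysin-type operations of the extended Kresch theory commute as used in the last display. Once this is in place, the remaining steps are essentially a reprise of the proof of Proposition~\ref{prop_virt_basechange}.
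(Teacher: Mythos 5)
Your overall architecture matches the paper's: the short exact sequence of vector bundle stacks $f'^*\frakN_h\to\frakE_{f'}\xrightarrow{\psi}g^*\frakE_f$ extracted from \eqref{eq:compatible}, the graph factorization $h=p\circ\Gamma_h$ to reduce the smooth-targets case to a smooth morphism followed by a regular local immersion, and deformation to the normal cone for the immersion case. But the step you call the heart of the proof is wrong as stated. You claim the key identity is $h^![\frakC_f]=[\frakC_{f'}]$, ``a statement about cones only, not about the obstruction theories,'' to be proved exactly as in Proposition~\ref{prop_virt_basechange}. In that proposition the obstruction theory on $\calX'$ is $Lg^*E^\bullet$, so $\frakE_{f'}=g^*\frakE_f$ and the identity lives in $A_*(g^*\frakE_f)$, where $0'^!$ applies to both sides. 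Here $E'^\bullet$ is a genuinely different obstruction theory: $\frakE_{f'}$ is an extension of $g^*\frakE_f$ by $f'^*\frakN_h$, the two ambient vector bundle stacks have different ranks, and $[\frakC_{f'}]$ --- the class to which $0^!_{\frakE_{f'}}$ is applied to define ${[\calX',E'^\bullet]}^\virt$ --- lives on $\frakE_{f'}$, whereas $h^![\frakC_f]$ lives on $g^*\frakE_f$. Your final display $h^!0^![\frakC_f]=0'^!h^![\frakC_f]=0'^![\frakC_{f'}]$ therefore does not typecheck. The genuinely new content of functoriality, which your reduction erases, is the comparison across $\psi$: for $h$ smooth one needs $\frakC_{f'}=g^*\frakC_f\times_{g^*\frakE_f}\frakE_{f'}$ (\cite{BF} 3.14) --- note this is not the isomorphism $\frakC_{f'}\cong g^*\frakC_f$ of Proposition~\ref{prop_cone_basechange}(2), but the identification of $\frakC_{f'}$ with the full $\psi$-preimage of $g^*\frakC_f$ inside the larger stack $\frakE_{f'}$ --- and for $h$ a regular local immersion one needs $[f'^*\frakN_h\times_{\calX'}\frakC_{f'}]=\psi^*[\frakC_{\sfrac{g^*\frakC_f}{\frakC_f}}]$ (\cite{BF} 5.9). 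Both statements use the compatibility datum \eqref{eq:compatible} in an essential way; they are not consequences of cone-level base change.

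A smaller but real gap: in the graph factorization you propose to ``chain'' the smooth and regular-immersion cases ``applied to the intrinsic normal cones of the intermediate base change,'' but to apply either case to the intermediate square you need a perfect obstruction theory on $\calY'\times_S\calX\to\calY'\times_S\calY$ compatible with $E^\bullet$ over $p$ and with $E'^\bullet$ over $\Gamma_h$; without it the intermediate virtual class is undefined and there is nothing to chain. The paper supplies $\Omega_{\calY'}\oplus E^\bullet$ explicitly for this purpose, and you should verify the two compatibility triangles for it.
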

\begin{proof}
  By \cite{BF} 2.7, the diagram~\eqref{eq:compatible} induces a short
  exact sequence of vector bundle stacks \[f'^*\frakN_h\rightarrow
  \frakE_{f'}\xrightarrow{\psi} g^*\frakE_f\] If $h$ is
  smooth, by \cite{BF} 3.14,
  $\frakC_{f'}=g^*\frakC_{f}\times_{g^*\frakE_{f}}\frakE_{f'}$, hence
  $0_{g^*\frakE_f}^![g^*\frakC_f]=0_{\frakE_{f'}}^![\frakC_{f'}]\in A_*(\calX')$ and
  \[h^!{[\calX,E^\bullet]}^\virt=h^!0_{\frakE_f}^![\frakC_f]
  =0_{g^*\frakE_f}^![g^*\frakC_f]={[\calX',E'^\bullet]}^\virt\text{.}\]
  If $\calY$ and $\calY'$ are smooth then $h$ factors as
  $\calY'\xrightarrow{\Gamma_h}\calY'\times_S\calY\xrightarrow{p}
  \calY$, where $\Gamma_h$ is the graph of $h$, which is a regular
  local immersion, and $p$ is smooth. We have the following cartesian
  diagram
  \[
  \begin{tikzpicture}[xscale=2.0,yscale=-1.4]
    \node (A0_0) at (0, 0) {$\calX'$};
    \node (A0_1) at (1, 0) {$\calY'\times_S \calX$};
    \node (A0_2) at (2, 0) {$\calX$};
    \node (A1_0) at (0, 1) {$\calY'$};
    \node (A1_1) at (1, 1) {$\calY'\times_S \calY$};
    \node (A1_2) at (2, 1) {$\calY$};
    \path (A0_0) edge [->]node [auto] {$\scriptstyle{}$} (A0_1);
    \path (A0_1) edge [->]node [auto] {$\scriptstyle{}$} (A0_2);
    \path (A1_0) edge [->]node [auto] {$\scriptstyle{\Gamma_h}$} (A1_1);
    \path (A0_2) edge [->]node [auto] {$\scriptstyle{f}$} (A1_2);
    \path (A1_1) edge [->]node [auto] {$\scriptstyle{p}$} (A1_2);
    \path (A0_0) edge [->]node [left] {$\scriptstyle{f'}$} (A1_0);
    \path (A0_1) edge [->]node [auto] {$\scriptstyle{\widetilde{f}}$} (A1_1);
  \end{tikzpicture}
  \]
  and we consider the obstruction theory $\Omega_{\calY'}\oplus
  E^\bullet$ for $\widetilde{f}$. Notice that
  $\Omega_{\calY'}\oplus E^\bullet$ is compatible with
  $E^\bullet$ over $p$ and with $E'^\bullet$ over $\Gamma_h$. Then, by
  the first part, we can assume that $h$ is a regular local
  immersion. By \cite{BF} 5.9,
  $[f'^*\frakN_h\times_{\calX'}\frakC_{f'}]=
  \psi^*[\frakC_{\sfrac{g^*\frakC_f}{\frakC_f}}]\in A_*(f'^*\frakN_h\times_{\calX'}\frakE_{f'})$ and
  hence
\begin{align*}
{[\calX',E'^\bullet]}^\virt&=0_{\frakE_{f'}}^![\frakC_{f'}]=
  0_{f'^*\frakN_h\times_{\calX'}\frakE_{f'}}^![f'^*\frakN_h\times_{\calX'}\frakC_{f'}]=
  0_{f'^*\frakN_h\times_{\calX'}\frakE_{f'}}^!\psi^*[\frakC_{\sfrac{g^*\frakC_f}{\frakC_f}}]\\ &= 0_{f'^*\frakN_h\times_{\calX'}g^*\frakE_{f}}^![\frakC_{\sfrac{g^*\frakC_f}{\frakC_f}}]= 0_{\frakC_f\times_{\calY}\frakN_h}^![\rho^*\frakC_{f'}]=
  0_{g^*\frakE_f}^!h^![\frakC_f]=h^!0_{\frakE_f}^![\frakC_f]\\ &=h^!{[\calX,
    E^\bullet]}^\virt\text{.}\qedhere
\end{align*}
\end{proof}
\section{Virtual pullbacks and Costello's pushforward}\label{section_int}
\begin{parag}
Although not mentioned in~\cite{kresch}, one can verify that the
theory can be extended to Artin stacks over a Dedekind domain. As a
consequence we get that Manolache's construction of the virtual
pullback in~\cite{manolache} is valid for morphisms of Artin stacks
over a Dedekind domain.
\end{parag}
\begin{prop}[Costello's pushforward formula]\label{prop_costellopf}
Let us consider a cartesian diagram
  \[
  \begin{tikzpicture}[xscale=1.5,yscale=-1.2]
    \node (A0_0) at (0, 0) {$\calX_1$};
    \node (A0_1) at (1, 0) {$\calX_2$};
    \node (A1_0) at (0, 1) {$\calY_1$};
    \node (A1_1) at (1, 1) {$\calY_2$};
    \path (A0_0) edge [->]node [auto] {$\scriptstyle{f}$} (A0_1);
    \path (A1_0) edge [->]node [auto] {$\scriptstyle{g}$} (A1_1);
    \path (A0_1) edge [->]node [auto] {$\scriptstyle{p_2}$} (A1_1);
    \path (A0_0) edge [->]node [left] {$\scriptstyle{p_1}$} (A1_0);
  \end{tikzpicture}
  \]
where
\begin{enumerate}
\item $\calY_1,\calY_2$ are Artin stacks over $D$ of the same pure dimension,
\item $\calX_1,\calX_2$ are Artin stacks over $D$ with quasi-finite diagonal,
\item $g$ is a morphism of degree $d$, $f$ is proper,
\item for $i=1,2$, $p_i$ admits perfect obstruction theory $E_i^\bullet$ such that $f^*E_2^\bullet\cong E_1^\bullet$. 
\end{enumerate}
Then \[f_*{[\calX_1, E_1^\bullet]}^{\virt}=d{[\calX_2,
  E_2^\bullet]}^{\virt}\] in each of the following cases
\begin{itemize}
\item[$(a)$] $g$ is projective,
\item[$(b)$] $\calY_1,\calY_2$ are Deligne-Mumford stacks and $g$ is proper,
\item[$(c)$] $\calY_1,\calY_2$ have quasi-finite diagonal and $g$ is proper.
\end{itemize}
\end{prop}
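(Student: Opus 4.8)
The plan is to reproduce Manolache's argument for Costello's formula, now over a Dedekind domain $D$, using the extension of Kresch's intersection theory and of the virtual pullback recalled above. Since $\calY_i$ is purely dimensional it has a fundamental class $[\calY_i]$, and the perfect obstruction theory $E_i^\bullet$ for $p_i$ furnishes a virtual pullback $p_i^!\colon A_*(\calY_i)\to A_*(\calX_i)$; unwinding its definition through the intrinsic normal cone $\frakC_{p_i}$ and the Gysin map $0^!$ attached to $\frakE_{p_i}=\sfrac{h^1}{h^0}(\tau_{[-1,0]}E_i^\bullet)$ gives $p_i^![\calY_i]=[\calX_i,E_i^\bullet]^\virt$.

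Second, I would check that hypothesis~(4) makes $E_1^\bullet$ and $E_2^\bullet$ into compatible obstruction theories over $f$: the isomorphism $f^*E_2^\bullet\cong E_1^\bullet$ should be taken compatibly with the obstruction maps to the truncated cotangent complexes and with the base-change map $f^*\tau_{\geq-1}L_{p_2}^\bullet\to\tau_{\geq-1}L_{p_1}^\bullet$ of \ref{cotangent_complex}~(3). The key ingredient is then Manolache's functoriality of the virtual pullback with respect to proper pushforward: for a cartesian square carrying such a compatible datum, with $f$ and $g$ proper, one has
\[
f_*\circ p_1^!=p_2^!\circ g_*\colon A_*(\calY_1)\longrightarrow A_*(\calX_2)\text{.}
\]
The three cases serve precisely to supply the proper pushforward on Chow groups needed here: projective pushforward when $g$ is projective ($(a)$); proper pushforward for Deligne--Mumford stacks ($(b)$); Kresch's proper pushforward for stacks with quasi-finite diagonal ($(c)$). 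In each case $f$, being proper with $\calX_1,\calX_2$ of quasi-finite diagonal (and a base change of $g$), also admits a proper pushforward, so that $f_*$ above is defined.

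Granting this, the formula drops out of the chain
\[
f_*[\calX_1,E_1^\bullet]^\virt=f_*p_1^![\calY_1]=p_2^!g_*[\calY_1]=p_2^!(d[\calY_2])=d\,p_2^![\calY_2]=d[\calX_2,E_2^\bullet]^\virt\text{,}
\]
the equality $g_*[\calY_1]=d[\calY_2]$ being immediate from $\calY_1,\calY_2$ being purely dimensional of the same dimension and $g$ having degree $d$.

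The main obstacle will not be this formal computation but the foundational work behind it: verifying that Manolache's construction of $p^!$, and in particular its functoriality with respect to proper pushforward, go through for Artin stacks over a Dedekind domain, and that hypotheses $(a)$, $(b)$, $(c)$ indeed deliver the Chow-theoretic pushforward that this functoriality requires; a minor additional point is to arrange the isomorphism of hypothesis~(4) so that it identifies $\frakC_{p_1}$ with the pullback of $\frakC_{p_2}$ under the induced map of vector bundle stacks.
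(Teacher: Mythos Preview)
Your proposal is correct and follows essentially the same route as the paper: both reduce the statement to Manolache's argument (her 5.29), observing that the three cases $(a)$, $(b)$, $(c)$ are precisely what is needed to have a proper pushforward $g_*$ on Chow groups (the paper cites \cite{EHKV}~2.8 for case $(c)$), and that the core technical input is the commutation $f_*\circ p_1^! = p_2^!\circ g_*$ of virtual pullback with (possibly non-representable) proper pushforward, which the paper remarks is proved as in \cite{manolache}~4.1. Your write-up in fact unpacks more of the chain of equalities than the paper's terse proof does.
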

\begin{proof}
  Since in each of the cases listed above we are able to pushforward
  along $g$ (see \cite{EHKV} 2.8 for case (c)), the statement follows by the same argument of
  \cite{manolache} 5.29, after noticing that non-representable proper
  pushforward commutes with virtual pullback (this can be shown in the
  same way as in \cite{manolache} 4.1).
\end{proof}

{\it E-mail address:} {\tt flavia.poma@gmail.com}

\end{document}